\setlist{font=\normalfont} 
\theoremstyle{plain}
\newtheorem{theorem}{Theorem}[section]
\newtheorem{lemma}[theorem]{Lemma}
\newtheorem{corollary}[theorem]{Corollary}
\theoremstyle{definition}
\newtheorem{example}[theorem]{Example}
\theoremstyle{remark}
\numberwithin{equation}{section}
\def\th@plain{%
  \thm@notefont{}
  \itshape 
}
\def\th@definition{%
  \thm@notefont{}
  \normalfont 
} \makeatother
\setlist{font=\normalfont}
\newcommand{\N}{\mathbb{N}}
\newcommand{\set}[1]{\{#1\}}
\newcommand{\cset}[2]{\set{{#1}\colon{#2}}}
\newcommand{\gen}[1]{\langle#1\rangle}
\newcommand{\abs}[1]{|#1|}
\newcommand{\dCay}[1]{\overrightarrow{\mathrm{Cay}}\,{(#1)}}
\newcommand{\dcCay}[1]{\overrightarrow{\mathrm{Cay}}{_c\,}{(#1)}}
\newcommand{\Cay}[1]{\mathrm{Cay}\,{(#1)}}
\DeclareMathOperator{\p}{\mathfrak{p}}
\newcommand{\indeg}[1]{\mathrm{indeg}\,{#1}}
\newcommand{\outdeg}[1]{\mathrm{outdeg}\,{#1}}
\begin{document}
\title{\textbf{An algorithm for finding minimal generating sets of finite groups}\footnote{Part of this work was presented at the conference Semigroups and Groups, Automata, Logics \mbox{(SandGAL 2019)}, Cremona, Italy, June 10-13, 2019.}}
\author{Tanakorn Udomworarat \quad and\quad Teerapong Suksumran\footnote{Corresponding author.}\,\,\,\href{https://orcid.org/0000-0002-1239-5586}{\includegraphics[scale=1]{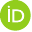}}\\[5pt]
Department of Mathematics\\
Faculty of Science, Chiang Mai University\\
Chiang Mai 50200, Thailand\\[5pt]
\texttt{tanakorn\_u@cmu.ac.th} (T. Udomworarat)\\
\texttt{teerapong.suksumran@cmu.ac.th} (T. Suksumran)}
\date{}
\maketitle

\begin{abstract}
In this article, we study connections between components of the Cayley graph $\mathrm{Cay}(G,A)$, where $A$ is an arbitrary subset of a group $G$, and cosets of the subgroup of $G$ generated by $A$. In particular, we show how to construct generating sets of $G$ if $\mathrm{Cay}(G,A)$ has finitely many components. Furthermore, we provide an algorithm for finding minimal generating sets of finite groups using their Cayley graphs. 
\end{abstract}
\textbf{Keywords:} Cayley graph; connected graph; coset; generating set; graph component.\\[3pt]
\textbf{2010 MSC:} Primary 05C25; Secondary 20F05, 20D99.

\section{Introduction}\label{sec: introdcution}
The problem of determining (minimal) generating sets of groups has been studied widely; see, for instance, \cite{MR3814347, MR2361465, MR572868, MR3073659, MR2023255, MR1289999, MR643291}. Although the existence of a generating set of a certain group is known, it might be a difficult task to explore it. In fact, there is no general effective method to determine a generating set of a given group. From a complexity point of view, MIN-GEN---finding a minimum size generating set---for groups is in DSPACE\,($\log^2{n}$); see Proposition 3 of \cite{MR2296106}.

Cayley graphs prove useful in studying various mathematical structures \cite{MR1957965, MR1927074, MR1939667, MR2228536, MR1955389, MR2574829, MR1934292, MR2548555} and have many applications in several fields \cite{MR2548552, MR2195356, MR2080457, MR2455531, MR2885431}. It is clear that a Cayley graph of any group $G$ encodes a large amount of information about algebraic, combinatorial, and geometric structures of $G$. In addition, Cayley graphs allow us to visualize groups and then to examine properties of groups in a convenient way. Often, a Cayley graph of a (finite) group $G$ relative to $A$ is drawn with the condition that $A$ be a generating set of $G$ (and even if $A = A^{-1}$; e.g. in geometric group theory). In this case, the Cayley graph is strongly connected. In fact, it is a standard result in the literature that a Cayley graph $\Cay{G, A}$ is connected if and only if $A$ generates $G$. In the present article, we investigate Cayley graphs in a more general setting by relaxing the requirement that $A$ be a generating set of $G$. In particular, we count the number of components of a Cayley graph in terms of the index of a subgroup. As a consequence of this result, we are able to construct a generating set of a certain group whenever its Cayley graph has finitely many components. This leads to an algorithm for finding some minimal generating sets of finite groups. We remark that some results presented in the next section are known in algebraic graph theory; see, for instance, \cite[Lemma 5.1]{MR1957965}, \cite[p.1]{MR2320601}, and \cite[pp. 302--303]{MR1927074}.

\section{Main results}
\subsection{Definitions and basic properties}\label{subsec: basic property}

Let $G=(V,E)$ be a graph. The equivalence relation $\p$ on $V$ is defined by $u\p v$ if and only if either $u=v$ or there is a path from $u$ to $v$ in $G$. We say that $C$ is a component of $G$ if and only if there is an equivalence class $X$ of the relation $\p$ such that $C$ is the subgraph of $G$ induced by $X$, that is, $C=G[X]$. Note that $u\p v$ if and only if $u$ and $v$ are in the same component of $G$.

Let $G$ be a (finite or infinite) group and let $A$ be an arbitrary subset of $G$. Recall that the {\it Cayley digraph} of $G$ with respect to $A$, denoted by $\dCay{G, A}$, is a directed graph with the vertex set $G$ and the arc set $\cset{(g, ga)}{g\in G, a\in A\textrm{ and }a\ne e}$. Remark that we exclude the identity $e$ in order to avoid loops in Cayley digraphs.  The (undirected) {\it Cayley graph} of $G$ with respect to $A$, denoted by $\Cay{G, A}$, is defined as a graph with the vertex set $G$ such that $\set{u, v}$ is an edge if and only if $u = va$ or $v = ua$ for some $a\in A\setminus\set{e}$. It is not difficult to check that $\Cay{G, A}$ is the underlying graph of $\dCay{G, A}$; that is, the vertex sets of $\Cay{G, A}$ and $\dCay{G, A}$ coincide and $\set{u, v}$ is an edge in $\Cay{G, A}$ if and only if $(u, v)$ or $(v, u)$ is an arc in $\dCay{G,A}$. To nonidentity elements of $A$, we can associate distinct colors, labeled by their names. The {\it Cayley color digraph} of $G$ with respect to $A$, denoted by $\dcCay{G, A}$, consists of the digraph $\dCay{G, A}$ in which any arc $(g, ga)$ is given color $a$ for all  $a\in A\setminus\set{e}$.

Next, we mention some basic properties of Cayley graphs and Cayley digraphs.

\begin{theorem}\label{thm: indegree & outdegree of dCay}
Let $G$ be a group and let $A$ be a finite subset of $G$. Then  
$$
\indeg{v}=\outdeg{v}=
\begin{cases}
\abs{A} & \textrm{ if } e\notin A;\\
\abs{A}-1 & \textrm{ if } e\in A 
\end{cases}
$$
for all vertices $v$ in $\dCay{G,A}$. Therefore, $\dCay{G,A}$ is regular.
\end{theorem}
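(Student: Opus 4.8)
The plan is to fix an arbitrary vertex $v$ in $\dCay{G,A}$ and count the arcs leaving it and the arcs entering it directly from the definition of the arc set $\cset{(g, ga)}{g\in G,\, a\in A\textrm{ and }a\ne e}$. For the out-degree, the arcs of the form $(v, va)$ with $a \in A\setminus\set{e}$ are exactly the arcs with tail $v$, so I would show that the map $a \mapsto (v, va)$ is a bijection from $A\setminus\set{e}$ onto the set of arcs with tail $v$. Injectivity is the only thing to check, and it follows from right cancellation in $G$: if $va = va'$ then $a = a'$. For the in-degree, an arc $(g, v)$ must satisfy $v = ga$ for some $a \in A\setminus\set{e}$, i.e. $g = va^{-1}$, so the map $a \mapsto (va^{-1}, v)$ is a bijection from $A\setminus\set{e}$ onto the set of arcs with head $v$; injectivity again follows from cancellation.

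**Then I would** split into the two cases according to whether $e \in A$. If $e \notin A$, then $A\setminus\set{e} = A$, so both counts give $\abs{A}$. If $e \in A$, then $\abs{A\setminus\set{e}} = \abs{A} - 1$, giving the second branch. Since the in-degree and out-degree of every vertex equal the same constant, $\dCay{G,A}$ is regular by definition, which gives the final sentence. One small subtlety worth a remark: the exclusion of $e$ from the arc set is precisely what guarantees that no arc is a loop, so each element of $A\setminus\set{e}$ really does contribute one genuine arc at $v$ (with $va \ne v$ and $va^{-1} \ne v$), and there is no double counting.

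**The only mild obstacle** is bookkeeping: making sure the bijections are set up so that every arc incident to $v$ is accounted for exactly once and that the identity-exclusion is handled cleanly. There is no deep content here — finiteness of $A$ is used only so that $\abs{A}$ is a well-defined natural number — so the proof is essentially a two-line counting argument once the bijections are named. I expect the write-up to be short.
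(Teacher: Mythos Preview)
Your proposal is correct and is essentially the same argument as the paper's: both fix a vertex $v$, use cancellation in $G$ to see that the assignments $a\mapsto va$ and $a\mapsto va^{-1}$ are injective on $A\setminus\set{e}$, and then split into the two cases $e\notin A$ and $e\in A$. The only cosmetic difference is that the paper phrases the count in terms of the sets $\{va:a\in A\setminus\set{e}\}$ and $\{va^{-1}:a\in A\setminus\set{e}\}$ of neighbours, whereas you phrase it as explicit bijections onto the sets of arcs with tail (respectively head) $v$; the content is identical.
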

\begin{proof}
Note that $va=vb$ if and only if $a=b$ for all $u,v\in G, a,b\in A$. In the case when $e\notin A$, we obtain
$$\outdeg{v}=\abs{\set{va:a\in A}}=\abs{A}$$ and $$\indeg{v}=\abs{\set{va^{-1}:a\in A}}=\abs{\set{a^{-1}:a\in A}}=\abs{A}.$$
In the case when $e\in A$, we obtain
$$\outdeg{v}=\abs{\set{va:a\in A-\set{e}}}=\abs{A-\set{e}}=\abs{A}-1$$ and
\begin{equation*}\tag*{\qedsymbol}
\indeg{v}=\abs{\set{va^{-1}:a\in A-\set{e}}}=\abs{\set{a^{-1}:a\in A-\set{e}}}=\abs{A-\set{e}}=\abs{A}-1.
\end{equation*}
\let\qed\relax
\end{proof}

\begin{theorem}\label{thm: degree of Cayley graph}
Let $G$ be a group and let $A$ be a finite subset of $G$ not containing $e$. Then  
\begin{equation}
\deg{v}=2\abs{A}-\abs{A\cap A^{-1}},
\end{equation}
where $A^{-1} = \cset{a^{-1}}{a\in A}$, for all vertices $v$ in $\Cay{G,A}$. Therefore, $\Cay{G,A}$ is regular. 
\end{theorem}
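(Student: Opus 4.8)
The plan is to compute the neighborhood of an arbitrary vertex $v$ directly from the definition of $\Cay{G,A}$ and to check that its size is independent of $v$. First I would note that, since $e\notin A$, we have $A\setminus\set{e}=A$, so by definition a vertex $u$ is adjacent to $v$ precisely when $u=va$ for some $a\in A$ or $v=ua$ for some $a\in A$; the latter condition is equivalent to $u=va^{-1}$. Hence the set of neighbors of $v$ is
\[
N(v)=vA\cup vA^{-1},\qquad\text{where } vA=\cset{va}{a\in A},\ vA^{-1}=\cset{va^{-1}}{a\in A}.
\]
Because $e\notin A$, neither $va=v$ nor $va^{-1}=v$ can hold, so $v\notin N(v)$; thus $\Cay{G,A}$ has no loops and $\deg v=\abs{N(v)}$.

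Next I would use that left translation $x\mapsto vx$ is a bijection of $G$, which gives $\abs{N(v)}=\abs{vA\cup vA^{-1}}=\abs{A\cup A^{-1}}$, a quantity that plainly does not depend on $v$. Finally, inclusion–exclusion yields $\abs{A\cup A^{-1}}=\abs{A}+\abs{A^{-1}}-\abs{A\cap A^{-1}}$, and since inversion $x\mapsto x^{-1}$ is a bijection of $G$ we have $\abs{A^{-1}}=\abs{A}$, so $\deg v=2\abs{A}-\abs{A\cap A^{-1}}$. As this value is the same for every vertex, $\Cay{G,A}$ is regular.

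There is no genuine obstacle here; the only points requiring a little care are confirming that the two ways an edge $\set{u,v}$ can arise (namely $u\in vA$ and $u\in vA^{-1}$) are exactly captured by the union $vA\cup vA^{-1}$ without overcounting — which is automatic, since in a simple graph we are counting neighbors, not labelled arcs — and recording the bijection facts $\abs{A^{-1}}=\abs{A}$ and that left translation is injective. Alternatively, one could derive the formula from Theorem~\ref{thm: indegree & outdegree of dCay} by passing to the underlying graph of $\dCay{G,A}$ and observing that an out-arc $(v,va)$ and an in-arc $(v,va^{-1})$ collapse to the same undirected edge exactly when $a\in A\cap A^{-1}$, but the direct count above seems the cleanest route.
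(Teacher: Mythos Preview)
Your proof is correct and follows essentially the same route as the paper: identify the neighborhood of $v$ as $vA\cup vA^{-1}$, then apply inclusion--exclusion together with the bijectivity of left translation (and of inversion) to obtain $2\abs{A}-\abs{A\cap A^{-1}}$. The only cosmetic difference is that the paper keeps the $v$ throughout, using $vA\cap vA^{-1}=v(A\cap A^{-1})$ before taking cardinalities, whereas you strip the $v$ first via left translation to reduce to $\abs{A\cup A^{-1}}$; the substance is identical.
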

\begin{proof}
Suppose that $vA=\set{va:a\in A}$ and $vA^{-1}=\set{va^{-1}:a\in A}$. Then $vA\cap vA^{-1}=v(A\cap A^{-1})$. Note that
\begin{align*}
\textrm{$\set{u,v}\in E(\Cay{G,A})$}\quad &\Leftrightarrow\quad \textrm{$(u,v)\in E(\dCay{G,A})$ or $(v,u)\in E(\dCay{G,A})$}\\
{} &\Leftrightarrow\quad \textrm{$u=va^{-1}$ for some $a\in A$ or $u=va$ for some $a\in A$}.
\end{align*}
Hence, 
\begin{align*}
\deg v &=\abs{\set{u\in G:\set{u,v}\in E(\Cay{G,A})}}\\
{} &= \abs{vA\cup vA^{-1}}\\
{} &= \abs{vA}+\abs{vA^{-1}}-\abs{vA\cap vA^{-1}}\\
{} &= \abs{vA}+\abs{vA^{-1}}-\abs{v(A\cap A^{-1})}\\
{} &= 2\abs{A}-\abs{A\cap A^{-1}}\tag*{\qedsymbol}
\end{align*}
\let\qed\relax
\end{proof}

The following lemma gives a characterization of the existence of paths in Cayley graphs, which is quite well known in the literature.
\begin{lemma}[See, e.g., Lemma 5.1 of \cite{MR1957965}]\label{lem: path from g to h}
Let $g$ and $h$ be distinct elements in a group $G$ and let $A\subseteq G$. Then there is a path from $g$ to $h$ in $\Cay{G,A}$ if and only if  $g^{-1}h = a_1^{\varepsilon_1}a_2^{\varepsilon_2}\cdots a_n^{\varepsilon_n}$ for some $a_1, a_2,\ldots, a_n\in A$, $\varepsilon_1, \varepsilon_2,\ldots, \varepsilon_n\in\set{\pm 1}$.
\end{lemma}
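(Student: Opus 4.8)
The plan is to prove both directions of the equivalence directly from the definition of an edge in $\Cay{G,A}$, unwinding a path into a product of generators and their inverses.

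First I would handle the forward direction. Suppose there is a path from $g$ to $h$ in $\Cay{G,A}$, say $g = v_0, v_1, \ldots, v_n = h$ with $\set{v_{i-1}, v_i} \in E(\Cay{G,A})$ for each $i$. By the definition of an edge in the Cayley graph, for each $i$ there is some $a_i \in A \setminus \set{e}$ with either $v_i = v_{i-1} a_i$ or $v_{i-1} = v_i a_i$; in the first case set $\varepsilon_i = 1$, in the second set $\varepsilon_i = -1$, so that in both cases $v_i = v_{i-1} a_i^{\varepsilon_i}$. Telescoping gives $h = v_n = v_0 a_1^{\varepsilon_1} a_2^{\varepsilon_2} \cdots a_n^{\varepsilon_n} = g\, a_1^{\varepsilon_1} \cdots a_n^{\varepsilon_n}$, hence $g^{-1} h = a_1^{\varepsilon_1} \cdots a_n^{\varepsilon_n}$, as required.

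For the converse, suppose $g^{-1} h = a_1^{\varepsilon_1} a_2^{\varepsilon_2} \cdots a_n^{\varepsilon_n}$ with each $a_i \in A$ and $\varepsilon_i \in \set{\pm 1}$. I would define $v_0 = g$ and $v_i = g\, a_1^{\varepsilon_1} \cdots a_i^{\varepsilon_i}$ for $1 \le i \le n$, so that $v_n = g (g^{-1} h) = h$. Each consecutive pair satisfies $v_i = v_{i-1} a_i^{\varepsilon_i}$, which means either $v_i = v_{i-1} a_i$ (if $\varepsilon_i = 1$) or $v_{i-1} = v_i a_i$ (if $\varepsilon_i = -1$), so $\set{v_{i-1}, v_i}$ is an edge provided $a_i \ne e$ and $v_{i-1} \ne v_i$. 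One minor technical point to address is the case $a_i = e$: such a factor can simply be deleted from the product without changing its value, so we may assume $a_i \ne e$ for all $i$ from the start. A second point is that to have a genuine path (rather than a walk) one must ensure the vertices are distinct, or more carefully, one must extract a path from the walk $v_0, \ldots, v_n$; since $g \ne h$, the walk is nontrivial, and it is standard that any walk between two distinct vertices contains a path between them (delete repeated vertices and the segment between their repetitions).

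I do not anticipate a serious obstacle here — the result is essentially a bookkeeping exercise translating between paths and words. The only place requiring a little care is the walk-versus-path distinction in the converse, and the harmless removal of identity factors; both are routine. Depending on how strictly the paper treats "path'' versus "walk,'' one may even be able to skip the walk-to-path reduction entirely if paths are allowed to repeat vertices, but I would include a sentence addressing it to be safe.
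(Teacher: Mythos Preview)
The paper does not actually supply its own proof of this lemma; it is stated with a reference to the literature and used as a known fact. Your argument is correct and is exactly the standard unwinding of the edge relation that one would expect: translate a path step-by-step into a word in $A\cup A^{-1}$ for the forward direction, and build a walk from the word for the converse, trimming identity factors and extracting a path from the walk since $g\ne h$. There is nothing to add or compare against here; your proof would serve perfectly well as the missing justification.
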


\subsection{Components of Cayley graphs and Cayley digraphs}

In what follows, if $A$ is a subset of a group $G$, then $\gen{A}$ denotes the subgroup of $G$ generated by $A$. That is, $\gen{A}$ is the smallest subgroup of $G$ containing $A$. Henceforward, $A$ is  a subset of a (finite or infinite) group $G$ unless stated otherwise. 

An equivalence class of the relation $\p$ induced by the Cayley graph $\Cay{G, A}$, defined in the beginning of Section \ref{subsec: basic property}, turns out to be a coset of $\gen{A}$ in $G$, as shown in the following theorem.

\begin{theorem}\label{theorem: coset of gen{A} iff p relation}
Let $u, v\in G$. Then $u$ and $v$ are in the same coset of $\gen{A}$ in $G$ if and only if $u\p v$, where $\p$ is the equivalence relation induced by $\Cay{G,A}$.
\end{theorem}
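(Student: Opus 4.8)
The plan is to prove the two implications separately, relying on Lemma~\ref{lem: path from g to h} to translate the graph-theoretic condition $u\p v$ into the algebraic statement that $u^{-1}v\in\gen{A}$.

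First I would dispose of the trivial case $u=v$, which lies in both sides of the equivalence automatically. So assume $u\ne v$. For the forward direction, suppose $u$ and $v$ lie in the same coset of $\gen{A}$, say $u\gen{A}=v\gen{A}$ (equivalently $v\gen{A}=u\gen{A}$ since cosets partition $G$); then $u^{-1}v\in\gen{A}$. Since $\gen{A}$ consists precisely of all finite products of elements of $A$ and their inverses, we may write $u^{-1}v=a_1^{\varepsilon_1}a_2^{\varepsilon_2}\cdots a_n^{\varepsilon_n}$ with $a_i\in A$ and $\varepsilon_i\in\{\pm1\}$. By Lemma~\ref{lem: path from g to h}, this is exactly the condition for the existence of a path from $u$ to $v$ in $\Cay{G,A}$, hence $u\p v$. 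One small wrinkle: if $u^{-1}v=e$ then the ``word'' is empty, but that forces $u=v$, the case already handled; and if some $a_i=e$ we may simply delete those letters without changing the product, so the word can be taken to use only elements of $A\setminus\{e\}$, matching the convention in the definition of the Cayley graph.

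For the converse, suppose $u\p v$ with $u\ne v$. Then there is a path from $u$ to $v$ in $\Cay{G,A}$, so by Lemma~\ref{lem: path from g to h} again, $u^{-1}v=a_1^{\varepsilon_1}\cdots a_n^{\varepsilon_n}$ for some $a_i\in A$, $\varepsilon_i\in\{\pm1\}$. Each factor $a_i^{\varepsilon_i}$ lies in $\gen{A}$, hence so does the product $u^{-1}v$. Therefore $v\in u\gen{A}$, which means $u$ and $v$ lie in the same left coset of $\gen{A}$ in $G$.

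I do not anticipate a genuine obstacle here: the content is essentially a dictionary lookup between ``word in $A^{\pm1}$'' and ``element of $\gen{A}$'' on the one hand, and between ``word in $A^{\pm1}$'' and ``path in $\Cay{G,A}$'' (Lemma~\ref{lem: path from g to h}) on the other. The only points requiring a little care are the bookkeeping around the empty word / identity element and the reconciliation of the exclusion of $e$ in the edge set of $\Cay{G,A}$ with the fact that $e\in\gen{A}$ always; both are handled by noting that inserting or deleting copies of $e$ from a word does not change its value. It is also worth remarking explicitly that left cosets are the right notion here, since the relation $\p$ is defined via right multiplication by elements of $A$, so $u^{-1}v\in\gen{A}$ rather than $vu^{-1}\in\gen{A}$ is what appears.
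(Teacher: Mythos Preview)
Your proof is correct and follows essentially the same approach as the paper: both directions reduce immediately to Lemma~\ref{lem: path from g to h} via the equivalence $u^{-1}v\in\gen{A}\iff u^{-1}v$ is a word in $A^{\pm1}$. Your version is slightly more streamlined (you go directly to $u^{-1}v\in\gen{A}$ rather than writing $u$ and $v$ separately as words in a common coset representative $g$), and you handle the bookkeeping around $e$ and the empty word more explicitly than the paper does, but the underlying argument is identical.
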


\begin{proof}
$(\Rightarrow)$ Let $X$ be a coset of $\gen{A}$ in $G$. Then $X=g\gen{A}$ for some $g\in G$. Let $u,v\in X$. Then $u=ga_1^{\varepsilon_1}a_2^{\varepsilon_2}\cdots a_n^{\varepsilon_n}$ and $v=gb_1^{\delta_1}b_2^{\delta_2}\cdots b_m^{\delta_m}$, where $a_1,a_2,\ldots,a_n,b_1,b_2,\ldots,b_m\in A$ and $\varepsilon_1,\varepsilon_2,\ldots,\varepsilon_n,\delta_1,\delta_2,\ldots,\delta_m\in\set{\pm 1}$. In the case when $u\neq v$, $$u^{-1}v=a_n^{-\varepsilon_n}\cdots a_2^{-\varepsilon_2}a_1^{-\varepsilon_1}b_1^{\delta_1}b_2^{\delta_2}\cdots b_m^{\delta_m}$$ implies that there is a path from $u$ to $v$ by Lemma \ref{lem: path from g to h}. Thus $u\p v$.

$(\Leftarrow)$ Let $u,v\in G$ and suppose that $u\p v$. If there exists a path from $u$ to $v$ in $\Cay{G,A}$, then $u^{-1}v=a_1^{\varepsilon_1}a_2^{\varepsilon_2}\cdots a_n^{\varepsilon_n}$, where $a_1,a_2,\ldots,a_n\in A, \varepsilon_1,\varepsilon_2,\ldots,\varepsilon_n\in\set{\pm 1}$. Suppose that $u\in g\gen{A}$ for some $g\in G$. Then $u=gb_1^{\delta_1}b_2^{\delta_2}\cdots b_m^{\delta_m}$ with \mbox{$b_1,b_2,\ldots,b_m\in A$}, $\delta_1,\delta_2,\ldots,\delta_m \in\set{\pm 1}$. Thus $v=gb_1^{\delta_1}b_2^{\delta_2}\cdots b_m^{\delta_m}a_1^{\varepsilon_1}a_2^{\varepsilon_2}\cdots a_n^{\varepsilon_n}\in g\gen{A}$. 
\end{proof}

\begin{corollary}\label{cor: equivalent class iff coset of gen{A}}
Let $X\subseteq G$. Then $X$ is an equivalence class of the relation $\p$ induced by $\Cay{G,A}$ if and only if $X$ is a coset of $\gen{A}$ in $G$.
\end{corollary}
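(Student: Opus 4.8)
The plan is to derive Corollary \ref{cor: equivalent class iff coset of gen{A}} directly from Theorem \ref{theorem: coset of gen{A} iff p relation}, since that theorem already pins down when two elements lie in the same coset and when they lie in the same $\p$-class. The underlying principle is simple: two equivalence relations on the same set have the same blocks (equivalence classes) precisely when they coincide as relations, and Theorem \ref{theorem: coset of gen{A} iff p relation} says exactly that $\p$ and ``lying in the same coset of $\gen{A}$'' are the same relation. So the corollary is essentially a restatement. Nonetheless, I would spell out both directions carefully.

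For the forward direction, suppose $X$ is a $\p$-class. Then $X$ is nonempty, so pick $u\in X$. I claim $X = u\gen{A}$. If $v\in X$, then $u\p v$, so by Theorem \ref{theorem: coset of gen{A} iff p relation} $v$ lies in the same coset of $\gen{A}$ as $u$, namely $u\gen{A}$; hence $X\subseteq u\gen{A}$. Conversely, if $v\in u\gen{A}$, then $u$ and $v$ are in the same coset, so again by Theorem \ref{theorem: coset of gen{A} iff p relation} $u\p v$, and since $X$ is the full $\p$-class of $u$ we get $v\in X$; hence $u\gen{A}\subseteq X$. Thus $X = u\gen{A}$ is a coset.

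For the reverse direction, suppose $X = g\gen{A}$ is a coset of $\gen{A}$ in $G$. Pick any $u\in X$ (e.g. $u = g$) and let $Y$ be the $\p$-class containing $u$; by the forward direction just proved, $Y$ is a coset of $\gen{A}$, and since $u\in Y\cap X$ and distinct cosets are disjoint, $Y = X$. Hence $X$ is a $\p$-class. Alternatively, and even more directly, one can argue that for any $v\in X$ one has $u\p v$ (same coset $\Rightarrow$ $u\p v$ by the theorem), so $X$ is contained in the $\p$-class of $u$, while any $w$ with $u\p w$ satisfies $w\in u\gen{A} = X$ by the theorem, so the $\p$-class of $u$ equals $X$.

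I do not anticipate any real obstacle here, since Theorem \ref{theorem: coset of gen{A} iff p relation} does all the work; the only point requiring a word of care is the nonemptiness of a $\p$-class (so that one may choose a representative $u$) and the disjointness of distinct cosets, both of which are standard. The write-up is mostly a matter of being explicit about the two set inclusions in each direction.
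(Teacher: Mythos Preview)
Your proposal is correct and matches the paper's approach: the paper states the corollary with no proof at all, treating it as an immediate consequence of Theorem~\ref{theorem: coset of gen{A} iff p relation}, and your argument simply spells out that immediate consequence (two equivalence relations that coincide have the same classes).
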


\begin{corollary}\label{cor: component of Cay is in the form of coset}Let $G$ be a group and let $A\subseteq G$.

\begin{enumerate}
    \item\label{item: component of Cayley graph Cay(G,A)} $C$ is a component of $\Cay{G,A}$ if and only if there is a unique coset $X$ of $\gen{A}$ such that $C = \Cay{G,A}[X]$.
    \item$C$ is a component of $\dCay{G,A}$ if and only if there is a unique coset $X$ of $\gen{A}$ such that $C = \dCay{G,A}[X]$.
\end{enumerate}
\end{corollary}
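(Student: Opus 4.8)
The plan is to derive Corollary \ref{cor: component of Cay is in the form of coset} directly from Corollary \ref{cor: equivalent class iff coset of gen{A}} together with the definition of a component given at the start of Section \ref{subsec: basic property}. Recall that, by definition, $C$ is a component of a graph $\Gamma$ precisely when $C = \Gamma[X]$ for some equivalence class $X$ of the relation $\p$ on the vertex set. So for part \ref{item: component of Cayley graph Cay(G,A)}, I would argue as follows. If $C$ is a component of $\Cay{G,A}$, then by definition there is an equivalence class $X$ of $\p$ (the relation induced by $\Cay{G,A}$) with $C = \Cay{G,A}[X]$; by Corollary \ref{cor: equivalent class iff coset of gen{A}}, $X$ is a coset of $\gen{A}$ in $G$. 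Conversely, if $X$ is a coset of $\gen{A}$, then Corollary \ref{cor: equivalent class iff coset of gen{A}} says $X$ is an equivalence class of $\p$, so $\Cay{G,A}[X]$ is a component. Thus $C$ is a component of $\Cay{G,A}$ if and only if $C = \Cay{G,A}[X]$ for some coset $X$ of $\gen{A}$.

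It remains to address the word \emph{unique} in the statement. For this I would observe that distinct cosets of $\gen{A}$ are disjoint, hence if $X$ and $Y$ are cosets with $\Cay{G,A}[X] = \Cay{G,A}[Y]$, then $X$ and $Y$ have the same vertex set and therefore $X = Y$. This gives uniqueness of the coset $X$ associated to a given component $C$, completing part \ref{item: component of Cayley graph Cay(G,A)}.

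For part (2), the key point is that $\dCay{G,A}$ and $\Cay{G,A}$ have the same underlying undirected graph, as noted in Section \ref{subsec: basic property}; in particular they share the same vertex set, and there is a (directed) path structure that induces the same partition into equivalence classes, since a coset $g\gen{A}$ is reached from $g$ by words in $A\cup A^{-1}$, and these translate to directed walks. More carefully, I would note that the equivalence relation $\p$ on the vertex set is defined in terms of existence of a path, and Corollary \ref{cor: equivalent class iff coset of gen{A}} (via Theorem \ref{theorem: coset of gen{A} iff p relation}) identifies its classes with cosets of $\gen{A}$ regardless of whether we view the structure as directed or undirected, because the proof of Theorem \ref{theorem: coset of gen{A} iff p relation} already uses arbitrary exponents $\varepsilon_i \in \set{\pm 1}$. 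Hence the classes of $\p$ for $\dCay{G,A}$ coincide with the cosets of $\gen{A}$, and the same two-paragraph argument (existence via the definition of component, uniqueness via disjointness of cosets) yields the claim.

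The argument is essentially bookkeeping, so I do not anticipate a serious obstacle; the only point requiring a little care is the treatment of the directed case in part (2), where one must be explicit that the equivalence relation $\p$ used to define components of $\dCay{G,A}$ has the same classes as the one for $\Cay{G,A}$ — but this is immediate once one recalls that $\p$ is defined via (undirected reachability through) paths and that Theorem \ref{theorem: coset of gen{A} iff p relation} is stated for $\Cay{G,A}$ whose paths correspond to words in $A \cup A^{-1}$, matching the coset description of $\gen{A}$.
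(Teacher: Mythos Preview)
Your proposal is correct and follows exactly the route the paper intends: the paper gives no explicit proof of this corollary, treating it as immediate from Corollary~\ref{cor: equivalent class iff coset of gen{A}} together with the definition of a component as an induced subgraph on a $\p$-class. Your handling of uniqueness (via disjointness of cosets) and of part~(2) (via the fact that $\Cay{G,A}$ is by definition the underlying graph of $\dCay{G,A}$, so the $\p$-classes coincide) fills in precisely the details the paper leaves implicit.
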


In general, finding the subgroup of $G$ generated by $A$ might be a complicated and \mbox{tedious} task. Corollary \ref{cor: component of Cay is in the form of coset} enables us to find this subgroup by looking at the component of $\Cay{G, A}$ that contains the identity of $G$ (see Theorem \ref{thm: component in terms of cosets}). Furthermore, it \mbox{indicates} that another component of $\Cay{G, A}$ is simply a left translation of the \mbox{identity} component (see the proof of Theorem \ref{thm: two components isomorphic}). We remark that part of Corollary \ref{cor: component of Cay is in the form of coset} \eqref{item: component of Cayley graph Cay(G,A)} is known in the literature; see, for instance, \cite[p. 1]{MR2320601}.

\begin{theorem}\label{thm: component in terms of cosets}
The subgroup $\gen{A}$ is equal to the set of vertices in the component of $\Cay{G,A}$ containing the identity of $G$. In general, $C$ is a component of $\Cay{G,A}$ containing a vertex $v$ if and only if the vertex set of $C$ equals $v\gen{A}$.
\end{theorem}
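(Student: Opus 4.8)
The plan is to derive this statement directly from Theorem~\ref{theorem: coset of gen{A} iff p relation} (equivalently, Corollary~\ref{cor: equivalent class iff coset of gen{A}}) together with the defining fact that the components of $\Cay{G,A}$ are exactly the subgraphs induced by the equivalence classes of the relation $\p$. Since $\p$ is an equivalence relation on the vertex set $G$, its classes partition $G$, so every vertex $v$ lies in a unique component, namely $\Cay{G,A}[X_v]$, where $X_v$ denotes the $\p$-class of $v$.

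First I would identify $X_v$ explicitly. By Theorem~\ref{theorem: coset of gen{A} iff p relation}, for $u\in G$ we have $u\p v$ if and only if $u$ and $v$ lie in the same coset of $\gen{A}$ in $G$; since $v\in v\gen{A}$, this says precisely that $X_v = v\gen{A}$. Hence the component of $\Cay{G,A}$ containing $v$ has vertex set $v\gen{A}$. Specializing to $v = e$ and using $e\gen{A} = \gen{A}$ yields the first assertion: $\gen{A}$ is the vertex set of the component containing the identity.

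For the general equivalence, the forward implication is exactly what was just established: if $C$ is the component containing $v$, then $V(C) = X_v = v\gen{A}$. For the converse, suppose $V(C) = v\gen{A}$ for some component $C$; since $v\in v\gen{A} = V(C)$, the component $C$ contains the vertex $v$. (Alternatively, one may invoke Corollary~\ref{cor: component of Cay is in the form of coset}\eqref{item: component of Cayley graph Cay(G,A)}: $C = \Cay{G,A}[X]$ for a unique coset $X$ of $\gen{A}$, and $v\in V(C) = X$ forces $X = v\gen{A}$.)

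I do not expect a genuine obstacle here; the statement is essentially a reformulation of the coset description of $\p$-classes. The only point that merits explicit mention is that ``the component containing $v$'' is well defined, which rests on $\p$ being an equivalence relation and on components being, by definition, the induced subgraphs on $\p$-classes; the remainder is routine bookkeeping with cosets.
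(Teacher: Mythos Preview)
Your proposal is correct and follows essentially the same approach as the paper: both identify the vertex set of the component through $v$ as the coset $v\gen{A}$ via the correspondence between $\p$-classes and cosets (the paper cites Corollary~\ref{cor: component of Cay is in the form of coset} where you cite the underlying Theorem~\ref{theorem: coset of gen{A} iff p relation}, and you even note this alternative). The only cosmetic difference is order of presentation---the paper handles the identity case first and declares the general case similar, while you do the general case and specialize---but the logic is identical.
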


\begin{proof}
Let $C$ be the component of $\Cay{G,A}$ containing the identity of $G$. By Corollary \ref{cor: component of Cay is in the form of coset}, $C=\Cay{G,A}[g\gen{A}]$ for some $g\in G$. Since $C$ contains the identity of $G$, that is, $e\in g\gen{A}$, it follows that $g\in\gen{A}$ and so $g\gen{A}=\gen{A}$. The remaining statement can be proved in a similar fashion.
\end{proof}

\begin{theorem}\label{thm: two components isomorphic}
If $B$ and $C$ are components of $\Cay{G,A}$, then $B$ and $C$ are isomorphic as graphs.
\end{theorem}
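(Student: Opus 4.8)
The plan is to exhibit an explicit graph isomorphism between any two components, realized by a left translation inside $G$. By Theorem~\ref{thm: component in terms of cosets} (together with Corollary~\ref{cor: component of Cay is in the form of coset}), there are elements $u, v\in G$ with $B = \Cay{G,A}[u\gen{A}]$ and $C = \Cay{G,A}[v\gen{A}]$. I would put $g = vu^{-1}$ and consider the left translation $L_g\colon G\to G$, $L_g(x) = gx$. The first step is to record the elementary fact that $L_g$ is an automorphism of the whole graph $\Cay{G,A}$: it is a bijection on the vertex set $G$, with inverse $L_{g^{-1}}$, and by the definition of the edge set, $\set{x,y}$ is an edge of $\Cay{G,A}$ if and only if $x = ya^{\pm 1}$ for some $a\in A\setminus\set{e}$, which holds if and only if $gx = (gy)a^{\pm 1}$ for some such $a$, i.e. if and only if $\set{L_g(x), L_g(y)}$ is an edge. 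Thus $L_g$ preserves adjacency and non-adjacency in both directions.

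The second step is to restrict $L_g$ to the vertex set of $B$. Since $L_g(u\gen{A}) = gu\gen{A} = vu^{-1}u\gen{A} = v\gen{A}$, the automorphism $L_g$ carries the vertex set of $B$ bijectively onto the vertex set of $C$. Because $B$ and $C$ are precisely the subgraphs of $\Cay{G,A}$ induced by $u\gen{A}$ and $v\gen{A}$ respectively, and because $L_g$ neither creates nor destroys edges, the restriction $L_g|_{u\gen{A}}$ is a graph isomorphism from $B$ onto $C$. This establishes the theorem.

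I do not anticipate a real obstacle here: once one knows from the earlier results that every component is the induced subgraph on a coset of $\gen{A}$, the statement is essentially forced, and the only point requiring a line of verification is that left translation by a fixed group element is a graph automorphism of $\Cay{G,A}$, which is immediate from the form of the edge set. The mild subtlety worth stating carefully is that the map must respect the \emph{induced}-subgraph structure — i.e. adjacency within the components must match adjacency within $\Cay{G,A}$ — which is exactly what "preserves adjacency in both directions" guarantees. One could instead route the argument through the translation-invariance implicit in Lemma~\ref{lem: path from g to h}, but the left-translation automorphism gives the cleanest proof.
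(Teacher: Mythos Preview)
Your proof is correct and follows essentially the same approach as the paper: the paper also writes the two components as induced subgraphs on cosets $g_1\gen{A}$ and $g_2\gen{A}$ and uses the left translation $\varphi(x)=g_2g_1^{-1}x$ as the isomorphism. The only difference is that the paper leaves the verification that this map is a graph isomorphism as ``straightforward to check,'' whereas you spell out that left translation is an automorphism of the full Cayley graph and hence restricts to an isomorphism of induced subgraphs.
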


\begin{proof}
Suppose that $B$ and $C$ are components of $\Cay{G,A}$. From Corollary \ref{cor: component of Cay is in the form of coset}, we obtain $B=\Cay{G,A}[g_1\gen{A}]$ and $C=\Cay{G,A}[g_2\gen{A}]$ for some $g_1,g_2\in G$. Let $\varphi$ be a map defined by $\varphi(x)=g_2g_1^{-1}x$ for all $x\in g_1\gen{A}$. It is straightforward to check that $\varphi$ is a graph isomorphism from $B$ to $C$. So $B$ and $C$ are isomorphic.
\end{proof}

Another application of Corollary \ref{cor: component of Cay is in the form of coset} reveals a geometric aspect of Cayley digraphs and Cayley graphs: they are disjoint unions of smaller Cayley digraphs (or graphs).

\begin{theorem}
Let $G$ be a group and let $A\subseteq G$. If $C_i, i\in I$, are all the components of $\dCay{G, A}$ and if $v_i$ is a vertex in $C_i$ for all $i\in I$, then
$$\dCay{G,A}=\dot{\bigcup\limits_{i\in I}}\dCay{G,A}[v_i\gen{A}],$$
where the dot notation indicates that the union is disjoint.
\end{theorem}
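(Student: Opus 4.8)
The plan is to reduce the statement to the coset description of components established in Corollary~\ref{cor: component of Cay is in the form of coset}, together with the elementary fact that the left cosets of any subgroup partition the group.

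First I would fix, for each $i\in I$, the unique coset $X_i$ of $\gen{A}$ with $C_i=\dCay{G,A}[X_i]$, which the second part of Corollary~\ref{cor: component of Cay is in the form of coset} supplies. Since $v_i$ is a vertex of $C_i$, we have $v_i\in X_i$, and because $X_i$ is a coset of $\gen{A}$ this forces $X_i=v_i\gen{A}$; hence $C_i=\dCay{G,A}[v_i\gen{A}]$. Next I would observe that $\set{X_i\colon i\in I}$ is exactly the set of all cosets of $\gen{A}$ in $G$: every component arises from a coset by the corollary, and conversely every coset $g\gen{A}$ is the vertex set of the component containing $g$ (by the same corollary, or by Theorem~\ref{thm: component in terms of cosets}). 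In particular the sets $v_i\gen{A}$, $i\in I$, are pairwise disjoint with union $G$, so they partition the vertex set $G$ of $\dCay{G,A}$.

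It then remains to verify that the arc set of $\dCay{G,A}$ is likewise partitioned, i.e.\ that no arc runs between two distinct pieces. An arc of $\dCay{G,A}$ has the form $(g,ga)$ with $a\in A\setminus\set{e}$; since $a\in\gen{A}$, the endpoints $g$ and $ga$ lie in the same coset of $\gen{A}$, hence in a common $v_i\gen{A}$, so the arc already belongs to $\dCay{G,A}[v_i\gen{A}]$. Conversely every arc of each $\dCay{G,A}[v_i\gen{A}]$ is an arc of $\dCay{G,A}$, by definition of an induced subdigraph. Combining the partition of the vertex set with the partition of the arc set gives $\dCay{G,A}=\dot{\bigcup\limits_{i\in I}}\dCay{G,A}[v_i\gen{A}]$.

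I do not expect a serious obstacle here: the only point needing a word of justification is that every arc stays within a single component, which is immediate from $a\in\gen{A}$. Alternatively, one may simply quote the general fact that a graph (resp.\ digraph) is the disjoint union of its components and then rewrite each $C_i$ as $\dCay{G,A}[v_i\gen{A}]$ via the identification obtained in the first step.
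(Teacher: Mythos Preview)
Your argument is correct and is precisely the one the paper has in mind: the paper states this theorem without proof, treating it as an immediate consequence of Corollary~\ref{cor: component of Cay is in the form of coset} (and Theorem~\ref{thm: component in terms of cosets}) together with the fact that the cosets of $\gen{A}$ partition $G$. Your write-up simply makes explicit the two points the paper leaves to the reader---that $X_i=v_i\gen{A}$ and that every arc $(g,ga)$ stays within a single coset---so there is nothing to add.
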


\begin{corollary}
Let $G$ be a group and let $A\subseteq G$. If $C_i, i\in I$, are all the components of $\Cay{G, A}$ and if $v_i$ is a vertex in $C_i$ for all $i\in I$, then
$$\Cay{G,A}=\dot{\bigcup\limits_{i\in I}}\Cay{G,A}[v_i\gen{A}].$$
\end{corollary}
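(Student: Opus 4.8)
The plan is to derive this corollary directly from the previous theorem together with the structural results already established for components. First I would recall that $\Cay{G,A}$ is by definition the underlying graph of $\dCay{G,A}$: the two share the same vertex set $G$, and $\set{u,v}$ is an edge of $\Cay{G,A}$ precisely when $(u,v)$ or $(v,u)$ is an arc of $\dCay{G,A}$. Hence the relation $\p$, and therefore the partition of $G$ into the vertex sets of components, is the same whether it is computed from $\Cay{G,A}$ or from $\dCay{G,A}$; in particular, each $C_i$ has the same vertex set as a component of $\dCay{G,A}$, and $v_i$ is a vertex of that component.

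Next I would invoke Corollary \ref{cor: component of Cay is in the form of coset}\eqref{item: component of Cayley graph Cay(G,A)} and Theorem \ref{thm: component in terms of cosets}: since $v_i$ lies in the component $C_i$, we get $C_i = \Cay{G,A}[v_i\gen{A}]$, and because the $C_i$, $i\in I$, are \emph{all} the components, the family $\set{v_i\gen{A}:i\in I}$ is exactly the family of cosets of $\gen{A}$ in $G$. As cosets of a subgroup partition the group, the sets $v_i\gen{A}$ are pairwise disjoint and their union is $G$, which is the vertex set of $\Cay{G,A}$. This settles the vertex part of the decomposition and shows that the union is disjoint.

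For the edges, I would argue that since $\Cay{G,A}[v_i\gen{A}]$ is a component of $\Cay{G,A}$, any edge of $\Cay{G,A}$ incident with a vertex of $v_i\gen{A}$ has both endpoints in $v_i\gen{A}$: if $\set{u,w}$ is an edge with $u\in v_i\gen{A}$, then $u\p w$, so $w\in v_i\gen{A}$ by Corollary \ref{cor: equivalent class iff coset of gen{A}}. Thus no edge of $\Cay{G,A}$ joins two distinct cosets, so $E(\Cay{G,A})$ is the disjoint union of the sets $E(\Cay{G,A}[v_i\gen{A}])$; combined with the vertex identity this is the asserted decomposition. Alternatively, one can bypass the edge bookkeeping entirely by passing to underlying graphs in the identity $\dCay{G,A}=\dot{\bigcup}_{i\in I}\dCay{G,A}[v_i\gen{A}]$ of the previous theorem, since the underlying graph of a disjoint union of digraphs is the disjoint union of the underlying graphs and $\Cay{G,A}[v_i\gen{A}]$ is the underlying graph of $\dCay{G,A}[v_i\gen{A}]$. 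I do not anticipate a genuine obstacle here; the only step that needs a moment's attention is confirming that no edge runs between two distinct cosets, which is precisely Corollary \ref{cor: component of Cay is in the form of coset}.
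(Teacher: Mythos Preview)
Your proposal is correct. The paper gives no explicit proof of this corollary at all; its placement immediately after the digraph decomposition theorem signals that the intended argument is simply to take underlying graphs in that identity, which is exactly the alternative route you sketch at the end. Your first, more detailed argument via Corollary~\ref{cor: equivalent class iff coset of gen{A}}, Corollary~\ref{cor: component of Cay is in the form of coset}, and Theorem~\ref{thm: component in terms of cosets} is also fine and is self-contained (it does not rely on the unproved digraph theorem), so in that sense it is slightly more than what the paper asks of the reader.
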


Next, we show that the number of components of $\Cay{G, A}$ is indeed the number of cosets of $\gen{A}$ in $G$. This result refines the well known fact that the Cayley graph $\Cay{G, A}$ is connected if and only if $A$ generates $G$.

\begin{lemma}\label{lem: number of component of Cay(G,A) equal number of component of Cay(G,genA)}
The numbers of components of $\Cay{G,A}$ and $\Cay{G,\gen{A}}$ are equal.
\end{lemma}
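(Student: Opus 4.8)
The plan is to reduce everything to the correspondence between components and cosets established earlier. By Corollary \ref{cor: component of Cay is in the form of coset}\eqref{item: component of Cayley graph Cay(G,A)}, the components of $\Cay{G,A}$ are exactly the induced subgraphs $\Cay{G,A}[X]$ as $X$ ranges over the cosets of $\gen{A}$ in $G$, and distinct cosets give distinct components. Hence the number of components of $\Cay{G,A}$ equals the number of cosets of $\gen{A}$ in $G$, i.e. the index $[G:\gen{A}]$.

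Next I would apply the same corollary to the Cayley graph $\Cay{G,\gen{A}}$, using the subset $\gen{A}$ in place of $A$. This shows that the number of components of $\Cay{G,\gen{A}}$ equals the number of cosets of $\gen{\gen{A}}$ in $G$. The only remaining point is the elementary observation that $\gen{\gen{A}} = \gen{A}$, since $\gen{A}$ is already a subgroup of $G$ and the subgroup generated by a subgroup is that subgroup itself. Consequently the number of components of $\Cay{G,\gen{A}}$ is also $[G:\gen{A}]$.

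Comparing the two counts gives the claim. There is essentially no obstacle here: the statement is a direct bookkeeping consequence of Corollary \ref{cor: component of Cay is in the form of coset}, with the sole (trivial) ingredient being $\gen{\gen{A}} = \gen{A}$. If one prefers to avoid counting cosets directly, an alternative is to note that the relations $\p$ induced by $\Cay{G,A}$ and by $\Cay{G,\gen{A}}$ have the same equivalence classes — both are the cosets of $\gen{A}$ by Corollary \ref{cor: equivalent class iff coset of gen{A}} — so the two graphs have literally the same vertex partition into components, and in particular the same number of them.
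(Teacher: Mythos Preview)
Your proof is correct and follows essentially the same approach as the paper: both invoke Corollary~\ref{cor: component of Cay is in the form of coset} to put the components of each Cayley graph in bijection with the cosets of $\gen{A}$, and conclude that the two component counts coincide. You are slightly more explicit than the paper in spelling out the step $\gen{\gen{A}}=\gen{A}$, but the argument is the same.
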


\begin{proof}
Set $E=\set{X:X~ \textrm{is a coset of}~ \gen{A}},~S=\set{C:C~ \textrm{is a component of}~ \Cay{G,A}}$, and $T=\set{C:C~ \textrm{is a component of}~ \Cay{G,\gen{A}}}$. By Corollary \ref{cor: component of Cay is in the form of coset}, $\abs{S}=\abs{E}=\abs{T}$.
\end{proof}

\begin{theorem}\label{thm: number of component Cay, subset}
The number of components of $\Cay{G,A}$ equals $[G:\gen{A}]$, the index of $\gen{A}$ in $G$.
\end{theorem}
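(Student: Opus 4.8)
The plan is to produce an explicit bijection between the set of components of $\Cay{G,A}$ and the set of cosets of $\gen{A}$ in $G$; once that is in hand, counting both sides and recalling that, by definition, $[G:\gen{A}]$ is the cardinality of the set of cosets finishes the proof.

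First I would set $S = \cset{C}{C \text{ is a component of } \Cay{G,A}}$ and $E = \cset{X}{X \text{ is a coset of } \gen{A} \text{ in } G}$, and define a map $\Phi\colon S \to E$ by sending a component $C$ to its vertex set $V(C)$. That $\Phi$ is well defined and injective is exactly the content of Corollary \ref{cor: component of Cay is in the form of coset}\eqref{item: component of Cayley graph Cay(G,A)}: each component $C$ equals $\Cay{G,A}[X]$ for a \emph{unique} coset $X$, and a component is determined by its vertex set. For surjectivity, take any coset $X = g\gen{A}$; by Corollary \ref{cor: equivalent class iff coset of gen{A}} it is an equivalence class of the relation $\p$, hence, by the definition of component given at the start of Section \ref{subsec: basic property}, $\Cay{G,A}[X]$ is a component, and $\Phi$ sends it to $X$. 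Therefore $\abs{S} = \abs{E} = [G:\gen{A}]$.

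Alternatively, one can bypass the explicit map and simply combine Lemma \ref{lem: number of component of Cay(G,A) equal number of component of Cay(G,genA)}, which reduces the count to $\Cay{G,\gen{A}}$, with the elementary observation that the components of $\Cay{G,\gen{A}}$ are precisely the subgraphs induced by the cosets of $\gen{A}$. I do not expect any genuine obstacle here: all the required structural facts were assembled in the preceding corollaries, and the only point worth keeping in mind is that when $[G:\gen{A}]$ is infinite the statement should be read as an equality of cardinal numbers, for which the bijection $\Phi$ still does the job.
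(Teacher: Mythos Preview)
Your proposal is correct and follows essentially the same route as the paper: the paper's one-line proof just cites Corollary~\ref{cor: component of Cay is in the form of coset} and Lemma~\ref{lem: number of component of Cay(G,A) equal number of component of Cay(G,genA)}, and the proof of that lemma already sets up your sets $S$ and $E$ and concludes $\abs{S}=\abs{E}$ from Corollary~\ref{cor: component of Cay is in the form of coset} without writing out $\Phi$ explicitly. Your explicit bijection simply unpacks what the paper leaves implicit, and your ``alternative'' paragraph is exactly the paper's argument.
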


\begin{proof}
The theorem follows directly from Corollary \ref{cor: component of Cay is in the form of coset} and Lemma \ref{lem: number of component of Cay(G,A) equal number of component of Cay(G,genA)}.
\end{proof}

As a consequence of Theorem \ref{thm: number of component Cay, subset}, we immediately obtain a few properties of \mbox{Cayley} graphs related to algebraic properties of groups.

\begin{corollary}
If $G$ is a finite group, then the number of components of $\Cay{G,A}$ divides $\abs{G}$.
\end{corollary}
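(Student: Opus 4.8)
The plan is to invoke Theorem~\ref{thm: number of component Cay, subset} to reduce the claim about graph components to a purely group-theoretic divisibility statement, and then apply Lagrange's theorem. First I would recall that, by Theorem~\ref{thm: number of component Cay, subset}, the number of components of $\Cay{G,A}$ equals the index $[G:\gen{A}]$ of the subgroup $\gen{A}$ in $G$. Since $G$ is finite, $\gen{A}$ is a subgroup of a finite group, so by Lagrange's theorem $\abs{G} = [G:\gen{A}]\cdot\abs{\gen{A}}$; in particular $[G:\gen{A}]$ divides $\abs{G}$. Combining the two facts yields that the number of components of $\Cay{G,A}$ divides $\abs{G}$.

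There is essentially no obstacle here: the corollary is a direct consequence of the preceding theorem together with a classical result, so the proof is one or two lines. The only point worth a brief remark is that $\gen{A}$ is genuinely a subgroup (so that Lagrange applies), but this is immediate from the definition of $\gen{A}$ as the smallest subgroup of $G$ containing $A$, as recalled at the start of Subsection~2.2. If one wanted to be slightly more self-contained, one could instead note that the components partition the vertex set $G$ into blocks that are cosets of $\gen{A}$ (Corollary~\ref{cor: component of Cay is in the form of coset}), each of size $\abs{\gen{A}}$, so the number of components times $\abs{\gen{A}}$ equals $\abs{G}$; but routing through Theorem~\ref{thm: number of component Cay, subset} is cleaner.

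In short, the write-up would read: by Theorem~\ref{thm: number of component Cay, subset} the number of components of $\Cay{G,A}$ is $[G:\gen{A}]$, and since $G$ is finite, Lagrange's theorem gives $[G:\gen{A}] \mid \abs{G}$, which completes the proof.
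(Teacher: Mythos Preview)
Your proposal is correct and matches the paper's own proof essentially verbatim: the paper also invokes Theorem~\ref{thm: number of component Cay, subset} and then the fact that $[G:\gen{A}]$ divides $\abs{G}$ (i.e., Lagrange's theorem). There is nothing to add.
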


\begin{proof}
This follows from Theorem \ref{thm: number of component Cay, subset} and the fact that $[G:\gen{A}]$ divides $\abs{G}$.
\end{proof}

\begin{corollary}\label{cor: characterization of connectivity in Cayly graph}
The Cayley graph $\Cay{G,A}$ is connected if and only if $G=\gen{A}$.
\end{corollary}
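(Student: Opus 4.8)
The plan is to derive this directly from Theorem \ref{thm: number of component Cay, subset}. First I would recall the elementary graph-theoretic fact that a graph is connected precisely when it has exactly one component, equivalently, when the relation $\p$ on its vertex set (defined at the start of Section \ref{subsec: basic property}) has a single equivalence class. Applied to $\Cay{G,A}$, this means that connectedness of $\Cay{G,A}$ is equivalent to the assertion that $\Cay{G,A}$ has exactly one component.

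Next I would invoke Theorem \ref{thm: number of component Cay, subset}, which states that the number of components of $\Cay{G,A}$ equals the index $[G:\gen{A}]$. Combining this with the previous step, $\Cay{G,A}$ is connected if and only if $[G:\gen{A}] = 1$. Since a subgroup of $G$ has index $1$ in $G$ exactly when it coincides with $G$, this is precisely the condition $\gen{A} = G$, which finishes the proof.

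I do not anticipate any genuine obstacle: all the substance is carried by Theorem \ref{thm: number of component Cay, subset}, and the remaining steps are purely definitional. The only point deserving a moment's attention is the degenerate situation $A = \emptyset$ (or $A = \set{e}$), in which $\gen{A} = \set{e}$ and $\Cay{G,A}$ has no edges; but the stated equivalence still holds there, since $\Cay{G,A}$ is then connected iff $\abs{G} = 1$ iff $\gen{A} = G$, consistently with $[G:\gen{A}] = \abs{G}$. As an alternative, one could bypass Theorem \ref{thm: number of component Cay, subset} and argue straight from Theorem \ref{thm: component in terms of cosets}: the component of $\Cay{G,A}$ containing the identity has vertex set $\gen{A}$, so the graph consists of this single component if and only if $\gen{A} = G$; however, routing the argument through the component count is cleaner and is the presentation I would adopt.
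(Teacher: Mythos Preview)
Your proposal is correct and follows essentially the same route as the paper: invoke Theorem \ref{thm: number of component Cay, subset} to identify the number of components with $[G:\gen{A}]$, and then observe that $[G:\gen{A}] = 1$ if and only if $\gen{A} = G$. The paper's proof is just the one-line version of what you wrote; your remarks on the degenerate cases and the alternative via Theorem \ref{thm: component in terms of cosets} are sound but not needed.
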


\begin{proof}
This follows from the fact that $[G:\gen{A}]=1$ if and only if $\gen{A}=G$.
\end{proof}

\begin{corollary}
Let $G$ be a group. Then $G$ is cyclic if and only if there exists an element $a\in G$ such that $\Cay{G,\set{a}}$ is connected.
\end{corollary}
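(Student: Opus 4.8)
The plan is to obtain the statement as an immediate specialization of Corollary \ref{cor: characterization of connectivity in Cayly graph}, which characterizes connectivity of $\Cay{G,A}$ by the condition $G=\gen{A}$. The only translation needed is the observation that, by definition, $G$ is cyclic precisely when $G=\gen{a}$ for some single element $a\in G$, together with the trivial identification $\gen{\set{a}}=\gen{a}$ for a one-element subset.

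First I would treat the forward direction: assuming $G$ is cyclic, fix $a\in G$ with $G=\gen{a}=\gen{\set{a}}$; then Corollary \ref{cor: characterization of connectivity in Cayly graph}, applied with $A=\set{a}$, yields that $\Cay{G,\set{a}}$ is connected, producing the required element. Conversely, if there exists $a\in G$ such that $\Cay{G,\set{a}}$ is connected, then the same corollary (again with $A=\set{a}$) forces $G=\gen{\set{a}}=\gen{a}$, so $G$ is generated by a single element and is therefore cyclic. Chaining these two implications gives the biconditional.

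I do not expect any genuine obstacle: the content is entirely carried by the previously established connectivity criterion, and the argument is a one-line invocation in each direction. The only subtleties worth a remark are the notational point $\gen{\set{a}}=\gen{a}$ and the degenerate case $a=e$ (equivalently, $G$ trivial), which is consistent with the statement since the trivial group is cyclic and $\Cay{G,\set{e}}$ is a single isolated vertex, hence connected.
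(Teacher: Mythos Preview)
Your proposal is correct and matches the paper's approach: the paper states this corollary without proof, as an immediate consequence of Corollary~\ref{cor: characterization of connectivity in Cayly graph} applied with $A=\set{a}$. One tiny quibble: your parenthetical ``$a=e$ (equivalently, $G$ trivial)'' is not quite an equivalence---one could choose $a=e$ in a nontrivial $G$, but then $\Cay{G,\set{e}}$ is disconnected and the biconditional is still fine---so the degenerate case causes no trouble either way.
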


Among other things, we obtain  a graph-theoretic version of the famous \mbox{Lagrange} theorem in abstract algebra, as shown in the following theorem.

\begin{theorem}\label{thm: Main Theorem}
Let $G$ be a group and let $H$ be a subgroup of $G$. Then the following hold:
\begin{enumerate}
\item\label{item: vertex component equal left coset} Each component of $\Cay{G, H}$ has a left coset of $H$ as its vertex set and is the complete graph $K_{\abs{H}}$. In particular, there is a one-to-one correspondence between the vertex sets of components of $\Cay{G, H}$ and the left cosets of $H$ in $G$.
\item\label{item: number of components} The Cayley graph $\Cay{G, H}$ has $[G\colon H]$ components. Hence, if $H$ is proper in $G$, then $\Cay{G, H}$ is disconnected.
\end{enumerate}
\end{theorem}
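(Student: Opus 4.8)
The plan is to deduce both parts from the structural results already established, the only genuinely new ingredient being the observation that each component of $\Cay{G,H}$ is a complete graph. Since $H$ is a subgroup, $\gen{H} = H$, so Corollary \ref{cor: component of Cay is in the form of coset}\eqref{item: component of Cayley graph Cay(G,A)} tells us that the components of $\Cay{G,H}$ are precisely the induced subgraphs $\Cay{G,H}[gH]$ as $gH$ ranges over the left cosets of $H$ in $G$, and moreover (via Corollary \ref{cor: equivalent class iff coset of gen{A}}) that the assignment sending a component to its vertex set is a bijection onto the set of left cosets of $H$. This yields the one-to-one correspondence asserted in part \eqref{item: vertex component equal left coset} at once.

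To see that $\Cay{G,H}[gH]$ is the complete graph $K_{\abs{H}}$, I would take two distinct vertices $u, v \in gH$, write $u = gh_1$ and $v = gh_2$ with $h_1, h_2 \in H$, and note that $u^{-1}v = h_1^{-1}h_2$ lies in $H$ because $H$ is closed under products and inverses. Since $u \ne v$, this element is not $e$, so $v = u\,(h_1^{-1}h_2)$ with $h_1^{-1}h_2 \in H \setminus \set{e}$, which is exactly the condition for $\set{u,v}$ to be an edge of $\Cay{G,H}$. Hence every pair of distinct vertices of the component is adjacent; as the component has $\abs{gH} = \abs{H}$ vertices, it is $K_{\abs{H}}$.

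For part \eqref{item: number of components}, applying Theorem \ref{thm: number of component Cay, subset} with $A = H$ (so that $\gen{A} = H$) shows that $\Cay{G,H}$ has exactly $[G\colon H]$ components. If $H$ is proper in $G$, then $[G\colon H] \ge 2$, so there is more than one component and $\Cay{G,H}$ is disconnected; alternatively, one may invoke Corollary \ref{cor: characterization of connectivity in Cayly graph}, since connectivity of $\Cay{G,H}$ is equivalent to $G = \gen{H} = H$.

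Since essentially every step is a direct appeal to a previously proven statement, I do not anticipate a real obstacle here. The one point that warrants a little care is the completeness argument, where one must use that $H$ — unlike an arbitrary subset $A$ — is closed under the group operations, which is precisely what forces $u^{-1}v$ to be an admissible nonidentity connection element of $H$ for any two distinct $u, v$ lying in the same coset.
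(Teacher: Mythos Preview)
Your proof is correct. In fact, the paper states this theorem without an accompanying proof, evidently regarding it as an immediate consequence of the preceding results (Corollaries \ref{cor: equivalent class iff coset of gen{A}} and \ref{cor: component of Cay is in the form of coset}, Theorem \ref{thm: number of component Cay, subset}, and Corollary \ref{cor: characterization of connectivity in Cayly graph}) once one observes that $\gen{H}=H$ for a subgroup $H$; your write-up supplies exactly those details, including the one point the earlier results do not cover --- completeness of each component --- which you handle correctly by using closure of $H$ under products and inverses.
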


In view of Theorem \ref{thm: Main Theorem}, the index formula $\abs{G} = [G\colon H]\abs{H}$ can be recovered by counting the number of vertices of $\Cay{G, H}$ in the case when $G$ is finite. Moreover, a simple application of Theorem \ref{thm: Main Theorem} shows that $2$ always divides $\abs{G}(\abs{H}-1)$ for any subgroup $H$ of a finite group $G$. In fact, the result is trivial when $H = \set{e}$. Therefore, we assume that $H\ne\set{e}$. Let $C$ be a component of $\Cay{G, H}$. Then $C$ is the complete graph $K_{\abs{H}}$ and so there are ${\abs{H}\choose 2} = \frac{\abs{H}(\abs{H}-1)}{2}$ edges in $C$. Hence, the total number of edges in $\Cay{G, H}$ equals $$\frac{\abs{H}(\abs{H}-1)}{2}[G\colon H] = \frac{\abs{G}(\abs{H}-1)}{2}.$$
This shows that $\frac{\abs{G}(\abs{H}-1)}{2}$ must be an integer.

The next theorem shows how to construct a generating set of $G$ from an arbitrary subset $A$ of $G$ whenever $\Cay{G, A}$ has a finite number of components (e.g., $G$ is finite or $G$ has a subgroup of finite index).

\begin{theorem}\label{thm: generating set of digraph k component}
If $\Cay{G,A}$ has finitely many components $C_1, C_2,\ldots, C_k$ and if $v_i$ is a vertex in $C_i$ for all $i = 1,2,\ldots, k$, then
\begin{equation*}
S_1 = A\cup\set{v_1^{-1}v_2, v_2^{-1}v_3,\ldots, v_{k-1}^{-1}v_k}\quad\textrm{and}\quad S_2 = A\cup\set{v_1^{-1}v_2, v_1^{-1}v_3,\ldots, v_1^{-1}v_k}
\end{equation*}
form generating sets of $G$.
\end{theorem}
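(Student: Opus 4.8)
The plan is to translate the hypothesis, via Corollary~\ref{cor: component of Cay is in the form of coset} and Theorem~\ref{thm: component in terms of cosets}, into the statement that the vertex sets $v_1\gen{A},\ldots,v_k\gen{A}$ of the components $C_1,\ldots,C_k$ are exactly the (distinct) cosets of $\gen{A}$ in $G$, so that $G=\bigcup_{i=1}^{k}v_i\gen{A}$ is their disjoint union. Since $A\subseteq S_1$ forces $\gen{A}\subseteq\gen{S_1}$, to obtain $\gen{S_1}=G$ it then suffices to show $v_i\in\gen{S_1}$ for every $i$, because this upgrades each coset $v_i\gen{A}$ to a subset of $\gen{S_1}$.

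The key steps for $S_1$, in order, are: (i) record the telescoping identity $v_i^{-1}v_j=(v_i^{-1}v_{i+1})(v_{i+1}^{-1}v_{i+2})\cdots(v_{j-1}^{-1}v_j)$, which together with $v_\ell^{-1}v_{\ell+1}\in S_1$ shows $v_i^{-1}v_j\in\gen{S_1}$ for $i\le j$, and hence (taking inverses) for all $i,j\in\{1,\ldots,k\}$; (ii) observe that the identity $e$ lies in exactly one of the cosets, say $e\in v_m\gen{A}$, so that by Theorem~\ref{thm: component in terms of cosets} this coset is $\gen{A}$ itself and therefore $v_m\in\gen{A}\subseteq\gen{S_1}$; (iii) conclude that $v_i=v_m(v_m^{-1}v_i)\in\gen{S_1}$ for every $i$, whence $G=\bigcup_{i=1}^{k}v_i\gen{A}\subseteq\gen{S_1}$. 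For $S_2$ the same scheme works, with step (i) replaced by the more direct $v_i^{-1}v_j=(v_1^{-1}v_i)^{-1}(v_1^{-1}v_j)\in\gen{S_2}$ (using $v_1^{-1}v_1=e$), after which steps (ii) and (iii) are verbatim. The degenerate case $k=1$ is immediate, since then $S_1=S_2=A$ and $\Cay{G,A}$ is connected, so $\gen{A}=G$ by Corollary~\ref{cor: characterization of connectivity in Cayly graph}.

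The proof is essentially routine, and I do not expect a serious obstacle. The one point needing care is step~(ii): knowing that every difference $v_i^{-1}v_j$ lies in the candidate subgroup does not by itself place the representatives $v_i$ there, so one must pin down which coset contains the identity and exploit that the corresponding representative $v_m$ genuinely lies in $\gen{A}$; everything else is bookkeeping.
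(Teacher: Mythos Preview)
Your proof is correct, but it follows a different route from the paper's own argument.

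The paper stays within its graph-theoretic framework: it shows directly that $\Cay{G,S_1}$ is connected by taking two arbitrary vertices $u,v$, locating them in components $C_i$ and $C_j$ of $\Cay{G,A}$, and then stringing together a path $u\leadsto v_i\leadsto v_{i+1}\leadsto\cdots\leadsto v_j\leadsto v$ using the $A$-edges inside components and the new edges $\set{v_\ell,v_{\ell+1}}$ (since $v_{\ell+1}=v_\ell(v_\ell^{-1}v_{\ell+1})$). Connectivity then gives $\gen{S_1}=G$ via Corollary~\ref{cor: characterization of connectivity in Cayly graph}.

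Your argument is purely algebraic: you pass immediately to the coset decomposition $G=\bigcup_i v_i\gen{A}$, telescope the products $v_i^{-1}v_j$ into $\gen{S_1}$, and then anchor everything by identifying the component containing $e$ so that some $v_m\in\gen{A}\subseteq\gen{S_1}$, whence all $v_i\in\gen{S_1}$. This is slightly more bookkeeping-heavy precisely at the point you flag as ``needing care'' (step~(ii)): the paper's path-based argument never has to single out the identity component, because connecting $u$ to $v$ through the chain of $v_\ell$'s works regardless of where $e$ sits. On the other hand, your approach avoids the detour through graph connectivity altogether and makes the algebraic content of the result transparent; it would transplant cleanly to a setting where no Cayley graph has been introduced.
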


\begin{proof}
First, we will show that $\Cay{G,S_1}$ is connected. Let $u$ and $v$ be distinct vertices in $\Cay{G,S_1}$. If $u$ and $v$ are in the same component of $\Cay{G,A}$, then there is a path from $u$ to $v$ in $\Cay{G,A}$. Since $\Cay{G,A}$ is a subgraph of $\Cay{G,S_1}$, there is a path from $u$ to $v$ in $\Cay{G,S_1}$. Therefore, we may suppose that $u$ and $v$ are in distinct components of $\Cay{G,A}$, namely the $i^{\rm th}$ and $j^{\rm th}$ components, respectively. Hence, $u\p_1 v_i$ and $v\p_1 v_j$, where $\p_1$ is the equivalence relation induced by $\Cay{G,A}$. It follows that $u\p_2 v_i$ and $v\p_2 v_j$, where $\p_2$ is the equivalence relation induced by $\Cay{G,S_1}$. Since $v_{i+1}=v_i(v_i^{-1}v_{i+1})$ and $v_i^{-1}v_{i+1}\ne e$ for all $i=1,2,\ldots,k-1$, we obtain that $\set{v_i,v_{i+1}}$ is an edge in $\Cay{G,S_1}$ for all $i=1,2,\ldots,k-1$. This implies that $v_i\p_2 v_j$. By symmetry and transitivity, $u\p_2 v$ and so there is a path from $u$ to $v$ in $\Cay{G,S_1}$. Thus $\Cay{G,S_1}$ is connected. The verification that $S_2$ is a generating set of $G$ is similar to the case of $S_1$.
\end{proof}

\begin{theorem}
Let $A$ be a finite subset of a group $G$ not containing $e$. If $\Cay{G,A}$ has $k$ components, where $k\in\N$, then $G$ is generated by $\abs{A}+k-1$ elements and so
$$
\mathrm{rank}(G) \leq \abs{A}+k-1.
$$
\end{theorem}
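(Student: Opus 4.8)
The plan is to read this off directly from Theorem~\ref{thm: generating set of digraph k component}. Since $k\in\N$, the graph $\Cay{G,A}$ has only finitely many components, say $C_1, C_2,\ldots, C_k$; fixing a vertex $v_i$ in each $C_i$, that theorem says that
$$
S_1 = A\cup\set{v_1^{-1}v_2, v_2^{-1}v_3,\ldots, v_{k-1}^{-1}v_k}
$$
is a generating set of $G$. So the entire content of the present statement is the cardinality count $\abs{S_1}\le\abs{A}+k-1$, after which $\mathrm{rank}(G)\le\abs{S_1}\le\abs{A}+k-1$ by the definition of rank.

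For the count, first I would observe that each adjoined element $v_i^{-1}v_{i+1}$ lies outside $A$: indeed $v_i$ and $v_{i+1}$ belong to different components of $\Cay{G,A}$, hence (by Corollary~\ref{cor: component of Cay is in the form of coset}, or Theorem~\ref{thm: component in terms of cosets}) to different cosets of $\gen{A}$, so $v_i^{-1}v_{i+1}\notin\gen{A}\supseteq A$; in particular $v_i^{-1}v_{i+1}\ne e$. Therefore $S_1$ is the disjoint union of $A$ with the set $\set{v_1^{-1}v_2,\ldots,v_{k-1}^{-1}v_k}$, which has at most $k-1$ elements, giving $\abs{S_1}\le\abs{A}+(k-1)$. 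Any further coincidences among the $v_i^{-1}v_{i+1}$ only make $S_1$ smaller, so the inequality is safe in all cases; hence $G$ is generated by at most $\abs{A}+k-1$ elements.

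There is essentially no obstacle here, since the result is a direct corollary of Theorem~\ref{thm: generating set of digraph k component}; the only thing worth being careful about is that the hypothesis $e\notin A$ is what makes $\abs{A}$ the right quantity to compare with (so that $\abs{A}$ equals the number of non-loop ``colours'' actually present in the Cayley graph), and that one should record why the $k-1$ new generators are genuinely new rather than already members of $A$, as done above. One could equally run the argument with $S_2$ in place of $S_1$ and obtain the same bound.
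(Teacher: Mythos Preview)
Your argument is correct and follows the same overall plan as the paper: invoke Theorem~\ref{thm: generating set of digraph k component} to obtain a generating set, verify that the adjoined elements are disjoint from $A$, and bound the cardinality. The differences are minor but worth noting. You work with $S_1$ and content yourself with the inequality $\abs{S_1}\le\abs{A}+k-1$, observing that possible repetitions among the $v_i^{-1}v_{i+1}$ only help; the paper instead works with $S_2$ and proves the exact equality $\abs{S_2}=\abs{A}+k-1$, since the elements $v_1^{-1}v_2,\ldots,v_1^{-1}v_k$ are pairwise distinct by left cancellation. Your disjointness step uses the coset description of components (if $v_i^{-1}v_{i+1}\in A\subseteq\gen{A}$ then $v_i,v_{i+1}$ lie in the same coset), whereas the paper argues directly that $a=v_1^{-1}v_i\in A$ would force an edge $\set{v_1,v_i}$ in $\Cay{G,A}$; both are valid and essentially equivalent. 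The paper's choice of $S_2$ buys the slightly sharper conclusion that there is an explicit generating set of size \emph{exactly} $\abs{A}+k-1$, while your $S_1$ argument already suffices for the rank bound, which is all the theorem asserts.
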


\begin{proof}
Let $S_2$ be the set defined in Theorem \ref{thm: generating set of digraph k component}. We claim that $\abs{S_2}=\abs{A}+k-1$. Let $B=\set{v_1^{-1}v_2, v_1^{-1}v_3,\ldots, v_1^{-1}v_k}$. By the left cancellation law, the elements in $B$ are all distinct. So $\abs{B}=k-1$. Next, we show that $A\cap B=\emptyset$. If there is an element $a\in A\cap B$, then $a=v_1^{-1}v_i$ for some $i\in\set{2,3,\ldots,k}$. Since $v_i=v_1(v_1^{-1}v_i)=v_1a$ and $a\in A\setminus\set{e}$, there is an edge from $v_1$ to $v_i$, a contradiction. Thus $A\cap B=\emptyset$. Hence, $$\abs{S_2}=\abs{A\cup B}=\abs{A}+\abs{B}-\abs{A\cap B}=\abs{A}+k-1.$$ By Theorem \ref{thm: generating set of digraph k component}, $G$ is generated by $S_2$. By definition,
$$
\mathrm{rank}(G) = \min{\cset{\abs{X}}{X\subseteq G\textrm{ and }\gen{X} = G}} \leq \abs{A}+k-1,
$$
which completes the proof. 
\end{proof}

\begin{corollary}
Let $G$ be a group and let $a\in G\setminus\set{e}$. If $\Cay{G,\set{a}}$ has $k$ components, where $k\in\N$, then $G$ is generated by $k$ elements and so $\mathrm{rank}(G) \leq k$.
\end{corollary}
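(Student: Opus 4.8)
The plan is to apply the theorem immediately preceding this corollary with the singleton set $A = \set{a}$. First I would verify that the hypotheses of that theorem are satisfied: since $a \in G\setminus\set{e}$, the set $\set{a}$ is a finite subset of $G$ not containing $e$, and $\abs{\set{a}} = 1$; by assumption $\Cay{G,\set{a}}$ has $k$ components with $k\in\N$. The previous theorem then gives that $G$ is generated by $\abs{\set{a}} + k - 1 = 1 + k - 1 = k$ elements, and hence, by the definition $\mathrm{rank}(G) = \min\cset{\abs{X}}{X\subseteq G \text{ and } \gen{X}=G}$, we conclude $\mathrm{rank}(G)\le k$.

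Since this is a direct specialization, there is no genuine obstacle; the only points deserving a moment's care are that $k\ge 1$ — so the bound is not vacuous, which holds because $\Cay{G,\set{a}}$ is nonempty (indeed $e,a\in G$) — and that the generating set $S_2$ from Theorem \ref{thm: generating set of digraph k component} still has exactly $k$ elements when $A$ is a singleton, which is immediate from the argument $A\cap B = \emptyset$ in the proof of the previous theorem applied verbatim. If a self-contained phrasing is preferred, one can exhibit the $k$ generators explicitly as $a$ together with $v_1^{-1}v_2, v_1^{-1}v_3,\ldots, v_1^{-1}v_k$, where $v_i$ denotes an arbitrarily chosen vertex in the $i^{\mathrm{th}}$ component of $\Cay{G,\set{a}}$, and invoke Theorem \ref{thm: generating set of digraph k component} directly to see that $G = \gen{S_2}$.
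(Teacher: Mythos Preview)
Your proposal is correct and matches the paper's approach: the corollary is stated without proof immediately after the preceding theorem, so the intended argument is precisely the specialization $A=\set{a}$ with $\abs{A}=1$ that you carry out. Your additional remarks about $k\ge 1$ and the explicit form of $S_2$ are fine elaborations but not required.
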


\section{An application to finding minimal generating sets}
A generating set $A$ of a (finite or infinite) group $G$ is {\it minimal} if no proper subset of $A$ generates $G$. It is clear that any finitely generated group has a minimal generating set, but finding one is quite difficult in certain circumstances. In this section, we provide an algorithm for finding minimal generating sets of finite groups as an application of Theorem \ref{thm: generating set of digraph k component}. Let $G$ be a finite group. A formal presentation of this algorithm is as follows.

\begin{enumerate}
    \item Set $A:=\set{a_1}$, where $a_1\in G$ and $a_1\neq e$.
    \item Set $v_1:=a_1,i:=1$.
    \item\label{item: skip process} Draw $\Cay{G,A}$.
    \item If $\Cay{G,A}$ is connected, skip to step \eqref{item: stop process}. Otherwise, set $i:=i+1$ and $v_2:=b_i$, where $b_i$ is an element of $G$ not in the component of $v_1$.
    \item Set $a_i:=v_1^{-1}v_2$ and $A:=A\cup\set{a_i}$.
    \item Return to step \eqref{item: skip process}.
    \item\label{item: stop process} If $i=1$, stop. Otherwise, set $i:=i-1$.
    \item Draw $\Cay{G,A\backslash\set{a_i}}$.
    \item If $\Cay{G,A\backslash\set{a_i}}$ is connected, set $A:=A\backslash\set{a_i}$. Otherwise, go to step \eqref{item: stop}.
    \item\label{item: stop} Return to step \eqref{item: stop process}.
\end{enumerate}

Theorem \ref{thm: generating set of digraph k component} ensures that this algorithm must stop at some point and turns $A$ into a minimal generating set of $G$. We illustrate how this algorithm works in the next example.

\begin{example}
Let $G$ be the group defined by presentation
\begin{equation}\label{presentation of group G}
G = \gen{a, b, c\colon a^2 = b^2 = (ab)^2= c^3 = acabc^{-1} =abcbc^{-1}}.
\end{equation}
Its Cayley table is given by Table \ref{tab: group K4:C3(A4)} (cf. \cite{GAP4.10.0}).

\begin{table}[ht]\centering
			\begin{tabular}{|c|c|c|c|c|c|c|c|c|c|c|c|c|}\hline
				$\cdot$ & $e$ & $a$ & $b$ & $ab$ & $c$ & $ac$ & $bc$ & $abc$ & $cc$ & $acc$ & $bcc$ & $abcc$\\ \hline
				$e$ & $e$ & $a$ & $b$ & $ab$ & $c$ & $ac$ & $bc$ & $abc$ & $cc$ & $acc$ & $bcc$ & $abcc$\\ \hline
				$a$ & $a$ & $e$ & $ab$ & $b$ & $ac$ & $c$ & $abc$ & $bc$ & $acc$ & $cc$ & $abcc$ & $bcc$\\ \hline
				$b$ & $b$ & $ab$ & $e$ & $a$ & $bc$ & $abc$ & $c$ & $ac$ & $bcc$ & $abcc$ & $cc$ & $acc$\\ \hline
				$ab$ & $ab$ & $b$ & $a$ & $e$ & $abc$ & $bc$ & $ac$ & $c$ & $abcc$ & $bcc$ & $acc$ & $cc$\\ \hline
				$c$ & $c$ & $bc$ & $abc$ & $ac$ & $cc$ & $bcc$ & $abcc$ & $acc$ & $e$ & $b$ & $ab$ & $a$\\ \hline
				$ac$ & $ac$ & $abc$ & $bc$ & $c$ & $acc$ & $abcc$ & $bcc$ & $cc$ & $a$ & $ab$ & $b$ & $e$\\ \hline
				$bc$ & $bc$ & $c$ & $ac$ & $abc$ & $bcc$ & $cc$ & $acc$ & $abcc$ & $b$ & $e$ & $a$ & $ab$\\ \hline
				$abc$ & $abc$ & $ac$ & $c$ & $bc$ & $abcc$ & $acc$ & $cc$ & $bcc$ & $ab$ & $a$ & $e$ & $b$\\ \hline
				$cc$ & $cc$ & $abcc$ & $acc$ & $bcc$ & $e$ & $ab$ & $a$ & $b$ & $c$ & $abc$ & $ac$ & $bc$\\ \hline
				$acc$ & $acc$ & $bcc$ & $cc$ & $abcc$ & $a$ & $b$ & $e$ & $ab$ & $ac$ & $bc$ & $c$ & $abc$\\ \hline
				$bcc$ & $bcc$ & $acc$ & $abcc$ & $cc$ & $b$ & $a$ & $ab$ & $e$ & $bc$ & $ac$ & $abc$ & $c$\\ \hline
				$abcc$ & $abcc$ & $cc$ & $bcc$ & $acc$ & $ab$ & $e$ & $b$ & $a$ & $abc$ & $c$ & $bc$ & $ac$\\ \hline
			\end{tabular}
			\caption{Cayley table of the group $G$ defined by (\ref{presentation of group G}) (cf. \cite{GAP4.10.0}).}\label{tab: group K4:C3(A4)}
		\end{table}

We can use the algorithm mentioned previously to find a minimal generating set of $G$ as follows:
\begin{enumerate}
    \item Set $A:=\set{b}$.
    \item Set $v_1:=b,i:=1$.
    \item Draw $\Cay{G,\set{b}}$, as shown in Figure \ref{fig: dcCay{A4,{b}}}.
    \item Since $\Cay{G,\set{b}}$ is not connected, set $i:=2$ and $v_2:=ab$.
    \item Set $a_2:=b^{-1}(ab)=a$ and $A:=\set{b,a}$.
    \item Draw $\Cay{G,\set{b,a}}$, as shown in Figure \ref{fig: dcCay{A4,{b,a}}}.
    \item Since $\Cay{G,\set{b,a}}$ is not connected, set $i:=3$ and $v_2:=bc$.
    \item Set $a_3:=b^{-1}(bc)=c$ and $A:=\set{b,a,c}$.
    \item Draw $\Cay{G,\set{b,a,c}}$, as shown in Figure \ref{fig: dcCay{A4,{b,a,c}}}.
    \item Since $\Cay{G,\set{b,a,c}}$ is connected and $i=3$, set $i:=2$.
    \item Draw $\Cay{G,\set{b,c}}$, as shown in Figure \ref{fig: dcCay{A4,{b,c}}}.
    \item Since $\Cay{G,\set{b,c}}$ is connected, set $A:=\set{b,c}$.
    \item Since $i=2$, set $i:=1$.
    \item Draw $\Cay{G,\set{c}}$, as shown in Figure \ref{fig: dcCay{A4,{c}}}. 
    \item Since $\Cay{G,\set{c}}$ is not connected, go to the next step.
    \item Since $i=1$, stop.
    
\end{enumerate}
This shows that $A=\set{b,c}$ is a minimal generating set of $G$.
\end{example}
    
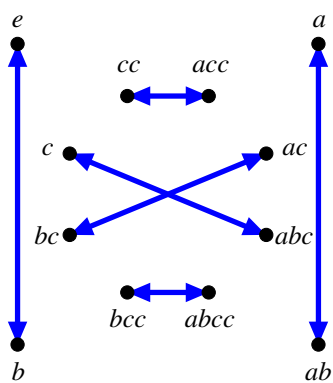
\begin{figure}[h]
\centering
\definecolor{qqqqff}{rgb}{0.,0.,1.}
\begin{tikzpicture}[line cap=round,line join=round,>=triangle 45,x=1.0cm,y=1.0cm]
\clip(-3.,-3.) rectangle (3.,3.);
\draw (-2.2,2.5) node[anchor=north west] {$e$};
\draw (1.8,2.5) node[anchor=north west] {$a$};
\draw (1.7,-2.1) node[anchor=north west] {$ab$};
\draw (-2.2,-2.1) node[anchor=north west] {$b$};
\draw (-0.8,1.9) node[anchor=north west] {$cc$};
\draw (0.2,1.9) node[anchor=north west] {$acc$};
\draw (1.4,0.8) node[anchor=north west] {$ac$};
\draw (1.3,-0.3) node[anchor=north west] {$abc$};
\draw (0.1,-1.4) node[anchor=north west] {$abcc$};
\draw (-0.9,-1.4) node[anchor=north west] {$bcc$};
\draw (-1.9,-0.3) node[anchor=north west] {$bc$};
\draw (-1.8,0.8) node[anchor=north west] {$c$};
\draw [line width=2.pt,color=qqqqff] (-2.,2.)-- (-2.,-2.);
\draw [line width=2.pt,color=qqqqff] (2.,2.)-- (2.,-2.);
\draw [line width=2.pt,color=qqqqff] (-0.5411961001461969,1.3065629648763766)-- (0.5411961001461971,1.3065629648763766);
\draw [line width=2.pt,color=qqqqff] (-0.5411961001461971,-1.3065629648763766)-- (0.5411961001461969,-1.3065629648763766);
\draw [line width=2.pt,color=qqqqff] (-1.3065629648763766,-0.541196100146197)-- (1.3065629648763766,0.5411961001461969);
\draw [line width=2.pt,color=qqqqff] (-1.3065629648763766,0.5411961001461969)-- (1.3065629648763766,-0.5411961001461971);
\draw [->,line width=1.pt,color=qqqqff] (-2.,-2.) -- (-2.,2.);
\draw [->,line width=1.pt,color=qqqqff] (-2.,2.) -- (-2.,-2.);
\draw [->,line width=1.pt,color=qqqqff] (2.,-2.) -- (2.,2.);
\draw [->,line width=1.pt,color=qqqqff] (2.,2.) -- (2.,-2.);
\draw [->,line width=1.pt,color=qqqqff] (-0.5411961001461969,1.3065629648763766) -- (0.5411961001461971,1.3065629648763766);
\draw [->,line width=1.pt,color=qqqqff] (0.5411961001461971,1.3065629648763766) -- (-0.5411961001461969,1.3065629648763766);
\draw [->,line width=1.pt,color=qqqqff] (-0.5411961001461971,-1.3065629648763766) -- (0.5411961001461969,-1.3065629648763766);
\draw [->,line width=1.pt,color=qqqqff] (0.5411961001461969,-1.3065629648763766) -- (-0.5411961001461971,-1.3065629648763766);
\draw [->,line width=1.pt,color=qqqqff] (-1.3065629648763766,-0.541196100146197) -- (1.3065629648763766,0.5411961001461968);
\draw [->,line width=1.pt,color=qqqqff] (1.3065629648763766,0.5411961001461969) -- (-1.3065629648763766,-0.5411961001461969);
\draw [->,line width=1.pt,color=qqqqff] (-1.3065629648763766,0.5411961001461969) -- (1.3065629648763766,-0.5411961001461971);
\draw [->,line width=1.pt,color=qqqqff] (1.3065629648763766,-0.5411961001461971) -- (-1.3065629648763766,0.5411961001461969);
\begin{scriptsize}
\draw [fill=black] (-2.,2.) circle (2.5pt);
\draw [fill=black] (2.,2.) circle (2.5pt);
\draw [fill=black] (2.,-2.) circle (2.5pt);
\draw [fill=black] (-2.,-2.) circle (2.5pt);
\draw [fill=black] (-1.3065629648763766,0.5411961001461969) circle (2.5pt);
\draw [fill=black] (-1.3065629648763766,-0.541196100146197) circle (2.5pt);
\draw [fill=black] (-0.5411961001461971,-1.3065629648763766) circle (2.5pt);
\draw [fill=black] (0.5411961001461969,-1.3065629648763766) circle (2.5pt);
\draw [fill=black] (1.3065629648763766,-0.5411961001461971) circle (2.5pt);
\draw [fill=black] (1.3065629648763766,0.5411961001461969) circle (2.5pt);
\draw [fill=black] (0.5411961001461971,1.3065629648763766) circle (2.5pt);
\draw [fill=black] (-0.5411961001461969,1.3065629648763766) circle (2.5pt);
\end{scriptsize}
\end{tikzpicture}
\caption{$\protect\overrightarrow{\mathrm{Cay}}_c(G,\set{b})$; blue arcs are induced by $b$.}
\label{fig: dcCay{A4,{b}}}
\end{figure}
    
\begin{figure}[h]
\centering
\definecolor{ffqqqq}{rgb}{1.,0.,0.}
\definecolor{qqqqff}{rgb}{0.,0.,1.}
\begin{tikzpicture}[line cap=round,line join=round,>=triangle 45,x=1.0cm,y=1.0cm]
\clip(-3.,-3.) rectangle (3.,3.);
\draw (-2.2,2.5) node[anchor=north west] {$e$};
\draw (1.8,2.5) node[anchor=north west] {$a$};
\draw (1.7,-2.1) node[anchor=north west] {$ab$};
\draw (-2.2,-2.1) node[anchor=north west] {$b$};
\draw (-0.8,1.9) node[anchor=north west] {$cc$};
\draw (0.2,1.9) node[anchor=north west] {$acc$};
\draw (1.4,0.8) node[anchor=north west] {$ac$};
\draw (1.3,-0.3) node[anchor=north west] {$abc$};
\draw (0.1,-1.4) node[anchor=north west] {$abcc$};
\draw (-0.9,-1.4) node[anchor=north west] {$bcc$};
\draw (-1.9,-0.3) node[anchor=north west] {$bc$};
\draw (-1.8,0.8) node[anchor=north west] {$c$};
\draw [line width=2.pt,color=qqqqff] (-2.,2.)-- (-2.,-2.);
\draw [line width=2.pt,color=qqqqff] (2.,2.)-- (2.,-2.);
\draw [line width=2.pt,color=qqqqff] (-0.5411961001461969,1.3065629648763766)-- (0.5411961001461971,1.3065629648763766);
\draw [line width=2.pt,color=qqqqff] (-0.5411961001461971,-1.3065629648763766)-- (0.5411961001461969,-1.3065629648763766);
\draw [line width=2.pt,color=qqqqff] (-1.3065629648763766,-0.541196100146197)-- (1.3065629648763766,0.5411961001461969);
\draw [line width=2.pt,color=qqqqff] (-1.3065629648763766,0.5411961001461969)-- (1.3065629648763766,-0.5411961001461971);
\draw [->,line width=1.pt,color=qqqqff] (-2.,-2.) -- (-2.,2.);
\draw [->,line width=1.pt,color=qqqqff] (-2.,2.) -- (-2.,-2.);
\draw [->,line width=1.pt,color=qqqqff] (2.,-2.) -- (2.,2.);
\draw [->,line width=1.pt,color=qqqqff] (2.,2.) -- (2.,-2.);
\draw [->,line width=1.pt,color=qqqqff] (-0.5411961001461969,1.3065629648763766) -- (0.5411961001461971,1.3065629648763766);
\draw [->,line width=1.pt,color=qqqqff] (0.5411961001461971,1.3065629648763766) -- (-0.5411961001461969,1.3065629648763766);
\draw [->,line width=1.pt,color=qqqqff] (-0.5411961001461971,-1.3065629648763766) -- (0.5411961001461969,-1.3065629648763766);
\draw [->,line width=1.pt,color=qqqqff] (0.5411961001461969,-1.3065629648763766) -- (-0.5411961001461971,-1.3065629648763766);
\draw [->,line width=1.pt,color=qqqqff] (-1.3065629648763766,-0.541196100146197) -- (1.3065629648763766,0.5411961001461968);
\draw [->,line width=1.pt,color=qqqqff] (1.3065629648763766,0.5411961001461969) -- (-1.3065629648763766,-0.5411961001461969);
\draw [->,line width=1.pt,color=qqqqff] (-1.3065629648763766,0.5411961001461969) -- (1.3065629648763766,-0.5411961001461971);
\draw [->,line width=1.pt,color=qqqqff] (1.3065629648763766,-0.5411961001461971) -- (-1.3065629648763766,0.5411961001461969);
\draw [line width=2.pt,color=ffqqqq] (-2.,2.)-- (2.,2.);
\draw [line width=2.pt,color=ffqqqq] (-2.,-2.)-- (2.,-2.);
\draw [line width=2.pt,color=ffqqqq] (-1.3065629648763766,0.5411961001461969)-- (-1.3065629648763766,-0.541196100146197);
\draw [line width=2.pt,color=ffqqqq] (1.3065629648763766,0.5411961001461969)-- (1.3065629648763766,-0.5411961001461971);
\draw [line width=2.pt,color=ffqqqq] (-0.5411961001461971,-1.3065629648763766)-- (0.5411961001461971,1.3065629648763766);
\draw [line width=2.pt,color=ffqqqq] (-0.5411961001461969,1.3065629648763766)-- (0.5411961001461969,-1.3065629648763766);
\draw [->,line width=1.pt,color=ffqqqq] (-2.,2.) -- (2.,2.);
\draw [->,line width=1.pt,color=ffqqqq] (2.,2.) -- (-2.,2.);
\draw [->,line width=1.pt,color=ffqqqq] (-2.,-2.) -- (2.,-2.);
\draw [->,line width=1.pt,color=ffqqqq] (2.,-2.) -- (-2.,-2.);
\draw [->,line width=1.pt,color=ffqqqq] (-1.3065629648763766,-0.541196100146197) -- (-1.3065629648763766,0.5411961001461968);
\draw [->,line width=1.pt,color=ffqqqq] (-1.3065629648763766,0.5411961001461969) -- (-1.3065629648763766,-0.5411961001461969);
\draw [->,line width=1.pt,color=ffqqqq] (1.3065629648763766,-0.5411961001461971) -- (1.3065629648763766,0.5411961001461969);
\draw [->,line width=1.pt,color=ffqqqq] (1.3065629648763766,0.5411961001461969) -- (1.3065629648763766,-0.5411961001461971);
\draw [->,line width=1.pt,color=ffqqqq] (-0.5411961001461971,-1.3065629648763766) -- (0.5411961001461971,1.3065629648763766);
\draw [->,line width=1.pt,color=ffqqqq] (0.5411961001461971,1.3065629648763766) -- (-0.5411961001461971,-1.3065629648763766);
\draw [->,line width=1.pt,color=ffqqqq] (-0.5411961001461969,1.3065629648763766) -- (0.5411961001461969,-1.3065629648763766);
\draw [->,line width=1.pt,color=ffqqqq] (0.5411961001461969,-1.3065629648763766) -- (-0.5411961001461969,1.3065629648763766);
\begin{scriptsize}
\draw [fill=black] (-2.,2.) circle (2.5pt);
\draw [fill=black] (2.,2.) circle (2.5pt);
\draw [fill=black] (2.,-2.) circle (2.5pt);
\draw [fill=black] (-2.,-2.) circle (2.5pt);
\draw [fill=black] (-1.3065629648763766,0.5411961001461969) circle (2.5pt);
\draw [fill=black] (-1.3065629648763766,-0.541196100146197) circle (2.5pt);
\draw [fill=black] (-0.5411961001461971,-1.3065629648763766) circle (2.5pt);
\draw [fill=black] (0.5411961001461969,-1.3065629648763766) circle (2.5pt);
\draw [fill=black] (1.3065629648763766,-0.5411961001461971) circle (2.5pt);
\draw [fill=black] (1.3065629648763766,0.5411961001461969) circle (2.5pt);
\draw [fill=black] (0.5411961001461971,1.3065629648763766) circle (2.5pt);
\draw [fill=black] (-0.5411961001461969,1.3065629648763766) circle (2.5pt);
\end{scriptsize}
\end{tikzpicture}
\caption{$\protect\overrightarrow{\mathrm{Cay}}_c(G,\set{b,a})$; blue arcs are induced by $b$ and red arcs are induced by $a$.} 
\label{fig: dcCay{A4,{b,a}}}
\end{figure}
    
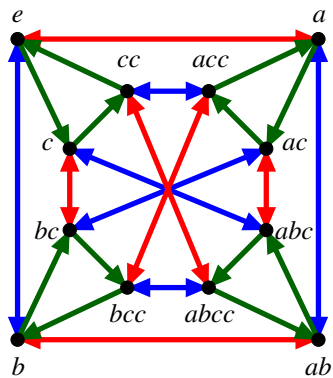
\begin{figure}[h]
\centering
\definecolor{qqwuqq}{rgb}{0.,0.39215686274509803,0.}
\definecolor{zzttqq}{rgb}{0.6,0.2,0.}
\definecolor{ffqqqq}{rgb}{1.,0.,0.}
\definecolor{qqqqff}{rgb}{0.,0.,1.}
\begin{tikzpicture}[line cap=round,line join=round,>=triangle 45,x=1.0cm,y=1.0cm]
\clip(-3.,-3.) rectangle (3.,3.);
\draw (-2.2,2.5) node[anchor=north west] {$e$};
\draw (1.8,2.5) node[anchor=north west] {$a$};
\draw (1.7,-2.1) node[anchor=north west] {$ab$};
\draw (-2.2,-2.1) node[anchor=north west] {$b$};
\draw (-0.8,1.9) node[anchor=north west] {$cc$};
\draw (0.2,1.9) node[anchor=north west] {$acc$};
\draw (1.4,0.8) node[anchor=north west] {$ac$};
\draw (1.3,-0.3) node[anchor=north west] {$abc$};
\draw (0.1,-1.4) node[anchor=north west] {$abcc$};
\draw (-0.9,-1.4) node[anchor=north west] {$bcc$};
\draw (-1.9,-0.3) node[anchor=north west] {$bc$};
\draw (-1.8,0.8) node[anchor=north west] {$c$};
\draw [line width=2.pt,color=qqqqff] (-2.,2.)-- (-2.,-2.);
\draw [line width=2.pt,color=qqqqff] (2.,2.)-- (2.,-2.);
\draw [line width=2.pt,color=qqqqff] (-0.5411961001461969,1.3065629648763766)-- (0.5411961001461971,1.3065629648763766);
\draw [line width=2.pt,color=qqqqff] (-0.5411961001461971,-1.3065629648763766)-- (0.5411961001461969,-1.3065629648763766);
\draw [line width=2.pt,color=qqqqff] (-1.3065629648763766,-0.541196100146197)-- (1.3065629648763766,0.5411961001461969);
\draw [line width=2.pt,color=qqqqff] (-1.3065629648763766,0.5411961001461969)-- (1.3065629648763766,-0.5411961001461971);
\draw [->,line width=1.pt,color=qqqqff] (-2.,-2.) -- (-2.,2.);
\draw [->,line width=1.pt,color=qqqqff] (-2.,2.) -- (-2.,-2.);
\draw [->,line width=1.pt,color=qqqqff] (2.,-2.) -- (2.,2.);
\draw [->,line width=1.pt,color=qqqqff] (2.,2.) -- (2.,-2.);
\draw [->,line width=1.pt,color=qqqqff] (-0.5411961001461969,1.3065629648763766) -- (0.5411961001461971,1.3065629648763766);
\draw [->,line width=1.pt,color=qqqqff] (0.5411961001461971,1.3065629648763766) -- (-0.5411961001461969,1.3065629648763766);
\draw [->,line width=1.pt,color=qqqqff] (-0.5411961001461971,-1.3065629648763766) -- (0.5411961001461969,-1.3065629648763766);
\draw [->,line width=1.pt,color=qqqqff] (0.5411961001461969,-1.3065629648763766) -- (-0.5411961001461971,-1.3065629648763766);
\draw [->,line width=1.pt,color=qqqqff] (-1.3065629648763766,-0.541196100146197) -- (1.3065629648763766,0.5411961001461968);
\draw [->,line width=1.pt,color=qqqqff] (1.3065629648763766,0.5411961001461969) -- (-1.3065629648763766,-0.5411961001461969);
\draw [->,line width=1.pt,color=qqqqff] (-1.3065629648763766,0.5411961001461969) -- (1.3065629648763766,-0.5411961001461971);
\draw [->,line width=1.pt,color=qqqqff] (1.3065629648763766,-0.5411961001461971) -- (-1.3065629648763766,0.5411961001461969);
\draw [line width=2.pt,color=ffqqqq] (-2.,2.)-- (2.,2.);
\draw [line width=2.pt,color=ffqqqq] (-2.,-2.)-- (2.,-2.);
\draw [line width=2.pt,color=ffqqqq] (-1.3065629648763766,0.5411961001461969)-- (-1.3065629648763766,-0.541196100146197);
\draw [line width=2.pt,color=ffqqqq] (1.3065629648763766,0.5411961001461969)-- (1.3065629648763766,-0.5411961001461971);
\draw [line width=2.pt,color=ffqqqq] (-0.5411961001461971,-1.3065629648763766)-- (0.5411961001461971,1.3065629648763766);
\draw [line width=2.pt,color=ffqqqq] (-0.5411961001461969,1.3065629648763766)-- (0.5411961001461969,-1.3065629648763766);
\draw [->,line width=1.pt,color=ffqqqq] (-2.,2.) -- (2.,2.);
\draw [->,line width=1.pt,color=ffqqqq] (2.,2.) -- (-2.,2.);
\draw [->,line width=1.pt,color=ffqqqq] (-2.,-2.) -- (2.,-2.);
\draw [->,line width=1.pt,color=ffqqqq] (2.,-2.) -- (-2.,-2.);
\draw [->,line width=1.pt,color=ffqqqq] (-1.3065629648763766,-0.541196100146197) -- (-1.3065629648763766,0.5411961001461968);
\draw [->,line width=1.pt,color=ffqqqq] (-1.3065629648763766,0.5411961001461969) -- (-1.3065629648763766,-0.5411961001461969);
\draw [->,line width=1.pt,color=ffqqqq] (1.3065629648763766,-0.5411961001461971) -- (1.3065629648763766,0.5411961001461969);
\draw [->,line width=1.pt,color=ffqqqq] (1.3065629648763766,0.5411961001461969) -- (1.3065629648763766,-0.5411961001461971);
\draw [->,line width=1.pt,color=ffqqqq] (-0.5411961001461971,-1.3065629648763766) -- (0.5411961001461971,1.3065629648763766);
\draw [->,line width=1.pt,color=ffqqqq] (0.5411961001461971,1.3065629648763766) -- (-0.5411961001461971,-1.3065629648763766);
\draw [->,line width=1.pt,color=ffqqqq] (-0.5411961001461969,1.3065629648763766) -- (0.5411961001461969,-1.3065629648763766);
\draw [->,line width=1.pt,color=ffqqqq] (0.5411961001461969,-1.3065629648763766) -- (-0.5411961001461969,1.3065629648763766);
\draw [line width=2.pt,color=qqwuqq] (-2.,2.)-- (-0.5411961001461969,1.3065629648763766);
\draw [line width=2.pt,color=qqwuqq] (-0.5411961001461969,1.3065629648763766)-- (-1.3065629648763766,0.5411961001461969);
\draw [line width=2.pt,color=qqwuqq] (-1.3065629648763766,0.5411961001461969)-- (-2.,2.);
\draw [line width=2.pt,color=qqwuqq] (0.5411961001461971,1.3065629648763766)-- (2.,2.);
\draw [line width=2.pt,color=qqwuqq] (2.,2.)-- (1.3065629648763766,0.5411961001461969);
\draw [line width=2.pt,color=qqwuqq] (1.3065629648763766,0.5411961001461969)-- (0.5411961001461971,1.3065629648763766);
\draw [line width=2.pt,color=qqwuqq] (0.5411961001461969,-1.3065629648763766)-- (1.3065629648763766,-0.5411961001461971);
\draw [line width=2.pt,color=qqwuqq] (1.3065629648763766,-0.5411961001461971)-- (2.,-2.);
\draw [line width=2.pt,color=qqwuqq] (2.,-2.)-- (0.5411961001461969,-1.3065629648763766);
\draw [line width=2.pt,color=qqwuqq] (-2.,-2.)-- (-0.5411961001461971,-1.3065629648763766);
\draw [line width=2.pt,color=qqwuqq] (-0.5411961001461971,-1.3065629648763766)-- (-1.3065629648763766,-0.541196100146197);
\draw [line width=2.pt,color=qqwuqq] (-1.3065629648763766,-0.541196100146197)-- (-2.,-2.);
\draw [->,line width=1.pt,color=qqwuqq] (-2.,2.) -- (-1.3065629648763766,0.5411961001461969);
\draw [->,line width=1.pt,color=qqwuqq] (-1.3065629648763766,0.5411961001461969) -- (-0.5411961001461969,1.3065629648763766);
\draw [->,line width=1.pt,color=qqwuqq] (-0.5411961001461969,1.3065629648763766) -- (-2.,2.);
\draw [->,line width=1.pt,color=qqwuqq] (0.5411961001461971,1.3065629648763766) -- (2.,2.);
\draw [->,line width=1.pt,color=qqwuqq] (2.,2.) -- (1.3065629648763766,0.5411961001461969);
\draw [->,line width=1.pt,color=qqwuqq] (1.3065629648763766,0.5411961001461969) -- (0.5411961001461971,1.3065629648763766);
\draw [->,line width=1.pt,color=qqwuqq] (-2.,-2.) -- (-1.3065629648763766,-0.5411961001461969);
\draw [->,line width=1.pt,color=qqwuqq] (-1.3065629648763766,-0.541196100146197) -- (-0.5411961001461971,-1.3065629648763766);
\draw [->,line width=1.pt,color=qqwuqq] (-0.5411961001461971,-1.3065629648763766) -- (-2.,-2.);
\draw [->,line width=1.pt,color=qqwuqq] (1.3065629648763766,-0.5411961001461971) -- (0.5411961001461969,-1.3065629648763766);
\draw [->,line width=1.pt,color=qqwuqq] (0.5411961001461969,-1.3065629648763766) -- (2.,-2.);
\draw [->,line width=1.pt,color=qqwuqq] (2.,-2.) -- (1.3065629648763766,-0.5411961001461971);
\begin{scriptsize}
\draw [fill=black] (-2.,2.) circle (2.5pt);
\draw [fill=black] (2.,2.) circle (2.5pt);
\draw [fill=black] (2.,-2.) circle (2.5pt);
\draw [fill=black] (-2.,-2.) circle (2.5pt);
\draw [fill=black] (-1.3065629648763766,0.5411961001461969) circle (2.5pt);
\draw [fill=black] (-1.3065629648763766,-0.541196100146197) circle (2.5pt);
\draw [fill=black] (-0.5411961001461971,-1.3065629648763766) circle (2.5pt);
\draw [fill=black] (0.5411961001461969,-1.3065629648763766) circle (2.5pt);
\draw [fill=black] (1.3065629648763766,-0.5411961001461971) circle (2.5pt);
\draw [fill=black] (1.3065629648763766,0.5411961001461969) circle (2.5pt);
\draw [fill=black] (0.5411961001461971,1.3065629648763766) circle (2.5pt);
\draw [fill=black] (-0.5411961001461969,1.3065629648763766) circle (2.5pt);
\end{scriptsize}
\end{tikzpicture}
\caption{$\protect\overrightarrow{\mathrm{Cay}}_c(G,\set{b,a,c})$; blue arcs are induced by $b$, red arcs are induced by $a$, and green arcs are induced by $c$.}
\label{fig: dcCay{A4,{b,a,c}}}
\end{figure}
    
\begin{figure}[h]
\centering
\definecolor{qqwuqq}{rgb}{0.,0.39215686274509803,0.}
\definecolor{zzttqq}{rgb}{0.6,0.2,0.}
\definecolor{qqqqff}{rgb}{0.,0.,1.}
\begin{tikzpicture}[line cap=round,line join=round,>=triangle 45,x=1.0cm,y=1.0cm]
\clip(-3.,-3.) rectangle (3.,3.);
\draw (-2.2,2.5) node[anchor=north west] {$e$};
\draw (1.8,2.5) node[anchor=north west] {$a$};
\draw (1.7,-2.1) node[anchor=north west] {$ab$};
\draw (-2.2,-2.1) node[anchor=north west] {$b$};
\draw (-0.8,1.9) node[anchor=north west] {$cc$};
\draw (0.2,1.9) node[anchor=north west] {$acc$};
\draw (1.4,0.8) node[anchor=north west] {$ac$};
\draw (1.3,-0.3) node[anchor=north west] {$abc$};
\draw (0.1,-1.4) node[anchor=north west] {$abcc$};
\draw (-0.9,-1.4) node[anchor=north west] {$bcc$};
\draw (-1.9,-0.3) node[anchor=north west] {$bc$};
\draw (-1.8,0.8) node[anchor=north west] {$c$};
\draw [line width=2.pt,color=qqqqff] (-2.,2.)-- (-2.,-2.);
\draw [line width=2.pt,color=qqqqff] (2.,2.)-- (2.,-2.);
\draw [line width=2.pt,color=qqqqff] (-0.5411961001461969,1.3065629648763766)-- (0.5411961001461971,1.3065629648763766);
\draw [line width=2.pt,color=qqqqff] (-0.5411961001461971,-1.3065629648763766)-- (0.5411961001461969,-1.3065629648763766);
\draw [line width=2.pt,color=qqqqff] (-1.3065629648763766,-0.541196100146197)-- (1.3065629648763766,0.5411961001461969);
\draw [line width=2.pt,color=qqqqff] (-1.3065629648763766,0.5411961001461969)-- (1.3065629648763766,-0.5411961001461971);
\draw [->,line width=1.pt,color=qqqqff] (-2.,-2.) -- (-2.,2.);
\draw [->,line width=1.pt,color=qqqqff] (-2.,2.) -- (-2.,-2.);
\draw [->,line width=1.pt,color=qqqqff] (2.,-2.) -- (2.,2.);
\draw [->,line width=1.pt,color=qqqqff] (2.,2.) -- (2.,-2.);
\draw [->,line width=1.pt,color=qqqqff] (-0.5411961001461969,1.3065629648763766) -- (0.5411961001461971,1.3065629648763766);
\draw [->,line width=1.pt,color=qqqqff] (0.5411961001461971,1.3065629648763766) -- (-0.5411961001461969,1.3065629648763766);
\draw [->,line width=1.pt,color=qqqqff] (-0.5411961001461971,-1.3065629648763766) -- (0.5411961001461969,-1.3065629648763766);
\draw [->,line width=1.pt,color=qqqqff] (0.5411961001461969,-1.3065629648763766) -- (-0.5411961001461971,-1.3065629648763766);
\draw [->,line width=1.pt,color=qqqqff] (-1.3065629648763766,-0.541196100146197) -- (1.3065629648763766,0.5411961001461968);
\draw [->,line width=1.pt,color=qqqqff] (1.3065629648763766,0.5411961001461969) -- (-1.3065629648763766,-0.5411961001461969);
\draw [->,line width=1.pt,color=qqqqff] (-1.3065629648763766,0.5411961001461969) -- (1.3065629648763766,-0.5411961001461971);
\draw [->,line width=1.pt,color=qqqqff] (1.3065629648763766,-0.5411961001461971) -- (-1.3065629648763766,0.5411961001461969);
\draw [line width=2.pt,color=qqwuqq] (-2.,2.)-- (-0.5411961001461969,1.3065629648763766);
\draw [line width=2.pt,color=qqwuqq] (-0.5411961001461969,1.3065629648763766)-- (-1.3065629648763766,0.5411961001461969);
\draw [line width=2.pt,color=qqwuqq] (-1.3065629648763766,0.5411961001461969)-- (-2.,2.);
\draw [line width=2.pt,color=qqwuqq] (0.5411961001461971,1.3065629648763766)-- (2.,2.);
\draw [line width=2.pt,color=qqwuqq] (2.,2.)-- (1.3065629648763766,0.5411961001461969);
\draw [line width=2.pt,color=qqwuqq] (1.3065629648763766,0.5411961001461969)-- (0.5411961001461971,1.3065629648763766);
\draw [line width=2.pt,color=qqwuqq] (0.5411961001461969,-1.3065629648763766)-- (1.3065629648763766,-0.5411961001461971);
\draw [line width=2.pt,color=qqwuqq] (1.3065629648763766,-0.5411961001461971)-- (2.,-2.);
\draw [line width=2.pt,color=qqwuqq] (2.,-2.)-- (0.5411961001461969,-1.3065629648763766);
\draw [line width=2.pt,color=qqwuqq] (-2.,-2.)-- (-0.5411961001461971,-1.3065629648763766);
\draw [line width=2.pt,color=qqwuqq] (-0.5411961001461971,-1.3065629648763766)-- (-1.3065629648763766,-0.541196100146197);
\draw [line width=2.pt,color=qqwuqq] (-1.3065629648763766,-0.541196100146197)-- (-2.,-2.);
\draw [->,line width=1.pt,color=qqwuqq] (-2.,2.) -- (-1.3065629648763766,0.5411961001461969);
\draw [->,line width=1.pt,color=qqwuqq] (-1.3065629648763766,0.5411961001461969) -- (-0.5411961001461969,1.3065629648763766);
\draw [->,line width=1.pt,color=qqwuqq] (-0.5411961001461969,1.3065629648763766) -- (-2.,2.);
\draw [->,line width=1.pt,color=qqwuqq] (0.5411961001461971,1.3065629648763766) -- (2.,2.);
\draw [->,line width=1.pt,color=qqwuqq] (2.,2.) -- (1.3065629648763766,0.5411961001461969);
\draw [->,line width=1.pt,color=qqwuqq] (1.3065629648763766,0.5411961001461969) -- (0.5411961001461971,1.3065629648763766);
\draw [->,line width=1.pt,color=qqwuqq] (-2.,-2.) -- (-1.3065629648763766,-0.5411961001461969);
\draw [->,line width=1.pt,color=qqwuqq] (-1.3065629648763766,-0.541196100146197) -- (-0.5411961001461971,-1.3065629648763766);
\draw [->,line width=1.pt,color=qqwuqq] (-0.5411961001461971,-1.3065629648763766) -- (-2.,-2.);
\draw [->,line width=1.pt,color=qqwuqq] (1.3065629648763766,-0.5411961001461971) -- (0.5411961001461969,-1.3065629648763766);
\draw [->,line width=1.pt,color=qqwuqq] (0.5411961001461969,-1.3065629648763766) -- (2.,-2.);
\draw [->,line width=1.pt,color=qqwuqq] (2.,-2.) -- (1.3065629648763766,-0.5411961001461971);
\begin{scriptsize}
\draw [fill=black] (-2.,2.) circle (2.5pt);
\draw [fill=black] (2.,2.) circle (2.5pt);
\draw [fill=black] (2.,-2.) circle (2.5pt);
\draw [fill=black] (-2.,-2.) circle (2.5pt);
\draw [fill=black] (-1.3065629648763766,0.5411961001461969) circle (2.5pt);
\draw [fill=black] (-1.3065629648763766,-0.541196100146197) circle (2.5pt);
\draw [fill=black] (-0.5411961001461971,-1.3065629648763766) circle (2.5pt);
\draw [fill=black] (0.5411961001461969,-1.3065629648763766) circle (2.5pt);
\draw [fill=black] (1.3065629648763766,-0.5411961001461971) circle (2.5pt);
\draw [fill=black] (1.3065629648763766,0.5411961001461969) circle (2.5pt);
\draw [fill=black] (0.5411961001461971,1.3065629648763766) circle (2.5pt);
\draw [fill=black] (-0.5411961001461969,1.3065629648763766) circle (2.5pt);
\end{scriptsize}
\end{tikzpicture}
\caption{$\protect\overrightarrow{\mathrm{Cay}}_c(G,\set{b,c})$; blue arcs are induced by $b$ and green arcs are induced by $c$.}
\label{fig: dcCay{A4,{b,c}}}
\end{figure}
    
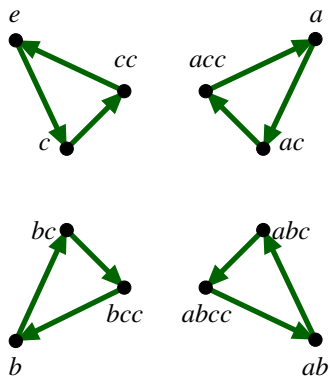
\begin{figure}[h]
\centering
\definecolor{qqwuqq}{rgb}{0.,0.39215686274509803,0.}
\definecolor{zzttqq}{rgb}{0.6,0.2,0.}
\begin{tikzpicture}[line cap=round,line join=round,>=triangle 45,x=1.0cm,y=1.0cm]
\clip(-3.,-3.) rectangle (3.,3.);
\draw (-2.2,2.5) node[anchor=north west] {$e$};
\draw (1.8,2.5) node[anchor=north west] {$a$};
\draw (1.7,-2.1) node[anchor=north west] {$ab$};
\draw (-2.2,-2.1) node[anchor=north west] {$b$};
\draw (-0.8,1.9) node[anchor=north west] {$cc$};
\draw (0.2,1.9) node[anchor=north west] {$acc$};
\draw (1.4,0.8) node[anchor=north west] {$ac$};
\draw (1.3,-0.3) node[anchor=north west] {$abc$};
\draw (0.1,-1.4) node[anchor=north west] {$abcc$};
\draw (-0.9,-1.4) node[anchor=north west] {$bcc$};
\draw (-1.9,-0.3) node[anchor=north west] {$bc$};
\draw (-1.8,0.8) node[anchor=north west] {$c$};
\draw [line width=2.pt,color=qqwuqq] (-1.9834292323733376,1.9834292323733376)-- (-0.5411961001461969,1.3065629648763766);
\draw [line width=2.pt,color=qqwuqq] (-0.5411961001461969,1.3065629648763766)-- (-1.3065629648763766,0.5411961001461969);
\draw [line width=2.pt,color=qqwuqq] (-1.3065629648763766,0.5411961001461969)-- (-1.9834292323733376,1.9834292323733376);
\draw [line width=2.pt,color=qqwuqq] (0.5411961001461971,1.3065629648763766)-- (2.,2.);
\draw [line width=2.pt,color=qqwuqq] (2.,2.)-- (1.3065629648763766,0.5411961001461969);
\draw [line width=2.pt,color=qqwuqq] (1.3065629648763766,0.5411961001461969)-- (0.5411961001461971,1.3065629648763766);
\draw [line width=2.pt,color=qqwuqq] (0.5411961001461969,-1.3065629648763766)-- (1.3065629648763766,-0.5411961001461971);
\draw [line width=2.pt,color=qqwuqq] (1.3065629648763766,-0.5411961001461971)-- (2.,-2.);
\draw [line width=2.pt,color=qqwuqq] (2.,-2.)-- (0.5411961001461969,-1.3065629648763766);
\draw [line width=2.pt,color=qqwuqq] (-1.9834292323733376,-2.0165707676266624)-- (-0.5411961001461971,-1.3065629648763766);
\draw [line width=2.pt,color=qqwuqq] (-0.5411961001461971,-1.3065629648763766)-- (-1.3065629648763766,-0.541196100146197);
\draw [line width=2.pt,color=qqwuqq] (-1.3065629648763766,-0.541196100146197)-- (-1.9834292323733376,-2.0165707676266624);
\draw [->,line width=1.pt,color=qqwuqq] (-1.9834292323733376,1.9834292323733376) -- (-1.3065629648763766,0.5411961001461969);
\draw [->,line width=1.pt,color=qqwuqq] (-1.3065629648763766,0.5411961001461969) -- (-0.5411961001461969,1.3065629648763766);
\draw [->,line width=1.pt,color=qqwuqq] (-0.5411961001461969,1.3065629648763766) -- (-1.9834292323733376,1.9834292323733376);
\draw [->,line width=1.pt,color=qqwuqq] (0.5411961001461971,1.3065629648763766) -- (2.,2.);
\draw [->,line width=1.pt,color=qqwuqq] (2.,2.) -- (1.3065629648763766,0.5411961001461969);
\draw [->,line width=1.pt,color=qqwuqq] (1.3065629648763766,0.5411961001461969) -- (0.5411961001461971,1.3065629648763766);
\draw [->,line width=1.pt,color=qqwuqq] (-1.9834292323733376,-2.0165707676266624) -- (-1.3065629648763766,-0.5411961001461969);
\draw [->,line width=1.pt,color=qqwuqq] (-1.3065629648763766,-0.541196100146197) -- (-0.5411961001461971,-1.3065629648763766);
\draw [->,line width=1.pt,color=qqwuqq] (-0.5411961001461971,-1.3065629648763766) -- (-1.9834292323733376,-2.0165707676266624);
\draw [->,line width=1.pt,color=qqwuqq] (1.3065629648763766,-0.5411961001461971) -- (0.5411961001461969,-1.3065629648763766);
\draw [->,line width=1.pt,color=qqwuqq] (0.5411961001461969,-1.3065629648763766) -- (2.,-2.);
\draw [->,line width=1.pt,color=qqwuqq] (2.,-2.) -- (1.3065629648763766,-0.5411961001461971);
\begin{scriptsize}
\draw [fill=black] (-1.9834292323733376,1.9834292323733376) circle (2.5pt);
\draw [fill=black] (2.,2.) circle (2.5pt);
\draw [fill=black] (2.,-2.) circle (2.5pt);
\draw [fill=black] (-1.9834292323733376,-2.0165707676266624) circle (2.5pt);
\draw [fill=black] (-1.3065629648763766,0.5411961001461969) circle (2.5pt);
\draw [fill=black] (-1.3065629648763766,-0.541196100146197) circle (2.5pt);
\draw [fill=black] (-0.5411961001461971,-1.3065629648763766) circle (2.5pt);
\draw [fill=black] (0.5411961001461969,-1.3065629648763766) circle (2.5pt);
\draw [fill=black] (1.3065629648763766,-0.5411961001461971) circle (2.5pt);
\draw [fill=black] (1.3065629648763766,0.5411961001461969) circle (2.5pt);
\draw [fill=black] (0.5411961001461971,1.3065629648763766) circle (2.5pt);
\draw [fill=black] (-0.5411961001461969,1.3065629648763766) circle (2.5pt);
\end{scriptsize}
\end{tikzpicture}
\caption{$\protect\overrightarrow{\mathrm{Cay}}_c(G,\set{c})$; green arcs are induced by $c$.}
\label{fig: dcCay{A4,{c}}}
\end{figure}

\par\noindent\textbf{Acknowledgments.}
The authors would like to express their special gratitude to Professor Milton Ferreira for his hospitality at the University of Aveiro, Portugal with financial support by Institute for Promotion of Teaching Science and Technology (IPST), Thailand, via Development and Promotion of Science and Technology Talents Project (DPST). This research was supported by the Research Center in Mathematics and Applied Mathematics, Chiang Mai University.

\bibliographystyle{amsplain}\addcontentsline{toc}{section}{References}
\bibliography{References}
\end{document}